\documentclass[reqno]{amsart}
\usepackage{amssymb,amsmath,hyperref}
\usepackage{amsrefs, float}
\usepackage[foot]{amsaddr}
\usepackage{bbold,stackrel, multicol}
 
\newcommand{\Z}{{\textsf{\textup{Z}}}}

\newtheorem{thm}{Theorem}

\newtheorem{cor}[thm]{Corollary}
\newtheorem{defi}[thm]{Definition}

\newtheorem{nota}[thm]{Notation}

\newtheorem{princ}[thm]{Principle}

\newtheorem*{tempo*}{Template}
\newtheorem{theorem}[thm]{Theorem}

\newtheorem{lemma}[thm]{Lemma}
\newtheorem{definition}[thm]{Definition}
\newtheorem{corollary}[thm]{Corollary}
\newtheorem{remark}[thm]{Remark}

\newcommand\be{\begin{equation}}
\newcommand\ee{\end{equation}}

\usepackage{amsmath,amsfonts}

\usepackage[applemac]{inputenc}

\def\bdefi{\begin{defi}\rm}
\def\edefi{\end{defi}}
\def\bnota{\begin{nota}\rm}
\def\enota{\end{nota}}
\def\FIVE{\Pi_{1}^{1}\text{-\textup{\textsf{CA}}}_{0}}
\def\SIX{\Pi_{2}^{1}\text{-\textsf{\textup{CA}}}_{0}}

\def\SIXK{\Pi_{k}^{1}\text{-\textsf{\textup{CA}}}_{0}^{\omega}}

\def\ZF{\textup{\textsf{ZF}}}
\def\ZFC{\textup{\textsf{ZFC}}}

\def\RCA{\textup{\textsf{RCA}}}
\def\({\textup{(}}
\def\){\textup{)}}

\def\RCAo{\textup{\textsf{RCA}}_{0}^{\omega}}
\def\ACAo{\textup{\textsf{ACA}}_{0}^{\omega}}

\def\N{{\mathbb  N}}

\def\R{{\mathbb  R}}

\def\SS{\textup{\textsf{S}}}

\def\di{\rightarrow}

\def\asa{\leftrightarrow}

\def\ACA{\textup{\textsf{ACA}}}

\def\QFAC{\textup{\textsf{QF-AC}}}
\def\QFDC{\textup{\textsf{QF-DC}}}

\def\DC{\textup{\textsf{DC}}}

\def\NIN{\textup{\textsf{NIN}}}

\def\TP{\textup{\textsf{Tp}}}
\def\Tp{\textup{\textsf{Tp}}}

\def\HBU{\textup{\textsf{HBU}}}
\def\HBT{\textup{\textsf{HBT}}}

\def\eps{\varepsilon}

\usepackage{mathrsfs}

\usepackage{graphicx}
\usepackage{tikz}
\usetikzlibrary{matrix, shapes.misc}
\usepackage{comment,tikz-cd}

\numberwithin{equation}{section}
\numberwithin{thm}{section}

\begin{document}
\title{The uncountability of the reals and the Axiom of Choice}
\author{Dag Normann}
\address{Department of Mathematics, The University 
of Oslo, P.O. Box 1053, Blindern N-0316 Oslo, Norway}
\email{dnormann@math.uio.no}
\author{Sam Sanders}
\address{Department of Philosophy II, RUB Bochum, Germany}
\email{sasander@me.com}
\keywords{Uncountability of the reals, Axiom of Choice, Dependent Choice, Reverse Mathematics}
\subjclass[2020]{Primary: 03B30, 03F35}

\begin{abstract}
The uncountability of the reals was first established by Cantor in what was later heralded as \emph{the first paper on set theory}.  
Since the latter constitutes the official foundations of mathematics, the logical study of the uncountability of the reals is a worthy endeavour for historical, foundational, and conceptual reasons.  
In this paper, we shall study the following principle:
\begin{center}
$\NIN_{[0,1]}$: \emph{there is no injection from the unit interval to the natural numbers.}
\end{center}
We show that relatively strong logical systems cannot prove $\NIN_{[0,1]}$.  In particular, the former system implies second-order arithmetic and fragments of the Axiom of Choice, including dependent choice.  
We also study the latter choice fragments in Kohlenbach's higher-order Reverse Mathematics.  
\end{abstract}
%
\maketitle              
%
\section{Introduction}
\subsection{Aim and motivation}\label{aimo}
The fact that infinity comes in different `sizes' was established by Cantor in the first paper on set theory (\cite{cantor1}), in the form of the uncountability of $\R$.
Given that set theory provides the current foundations of mathematics, the study of the uncountability of $\R$ is interesting for historical, foundational, and conceptual reasons.  
Below, we study the uncountability of $\R$ formulated as follows in Kohlenbach's higher-order Reverse Mathematics (\cite{kohlenbach2}). 
\begin{princ}[$\NIN_{[0,1]}$]
For any $Y:[0,1]\di \N$, there are $x,y\in [0,1]$ such that $x\ne_{\R}y$ and $Y(x)=_{\N}Y(y)$.  
\end{princ}
The authors have shown in \cite{dagsamX} that a relatively strong axiom system, namely $\Z_{2}^{\omega}+\QFAC^{0,1}$, cannot prove $\NIN_{[0,1]}$.  
This system is introduced in Section \ref{prelim} and implies second-order arithmetic and countable choice.
The goal of this paper is to extend this negative result, namely to show that $\NIN_{[0,1]}$ is not provable in $\Z_{2}^{\omega}+\bigcup_{\sigma}\QFAC^{\sigma, 1}$ where the latter fragment of the Axiom of Choice is as follows.  
\begin{princ}[$\QFAC^{\sigma, \tau}$] For finite types $\sigma, \tau$ and quantifier-free $\varphi$:  
\[
(\forall f^{\sigma})(\exists g^{\tau})\varphi(f, g)\di (\exists \Phi^{\sigma\di \tau})(\forall f^{\sigma})\varphi(f, \Phi(f)).
\]
\end{princ}
\noindent
Here, $\QFAC^{1,1}$ already gives rise to fragments of the Axiom of dependent Choice.  

\smallskip

Finally, our negative result is established in Section \ref{maink}.  
We also study $\QFAC^{\sigma, 1}$ and its kin in higher-order Reverse Mathematics in Section \ref{torralf}.  

\subsection{Preliminaries}\label{prelim}
We introduce some required definitions for the below.  We assume familiarity with Kohlenbach's higher-order Reverse Mathematics (abbreviated RM in the below), the base theory $\RCAo$ in particular.  
The original reference is \cite{kohlenbach2} with a recent `basic' introduction in \cite{sammetric}.  
We note that real numbers are defined in $\RCAo$ in the same way as in second-order RM, i.e.\ as fast-converging Cauchy sequences.  We stress that some of the below functionals were already 
studied by Hilbert and Bernays in the \emph{Grundlagen} (\cite{hillebilly2}).  
 
 \smallskip

First of all, we consider the following axiom where the functional $E$ is also called \emph{Kleene's quantifier $\exists^{2}$} and is discontinuous on Cantor space.  
\be\tag{$\exists^{2}$}
(\exists E:\N^{\N}\di \{0,1\})(\forall f\in\N^{\N})( E(f)=0 \asa (\exists n\in \N)(f(n)=0)).
\ee
We write $\ACAo\equiv \RCAo+(\exists^{2})$ and observe that the latter proves the same second-order sentences as $\ACA_{0}$ (see \cite{hunterphd}).
We shall mostly work in $\ACAo$, which is convenient as a set of reals $X\subset \R$ is defined via $F_{X}:\R\di \{0,1\}$ where $x\in X\asa F_{X}(x)=1$ for all $x\in \R$. 
Over $\RCAo$, $(\exists^{2})$ is equivalent to $(\mu^{2})$ (\cite{kohlenbach2}) where the later expresses that there is $\mu : \N^{\N}\di \N$ such that for $f\in \N^{\N}$ we have
\be\label{muf}
(\exists n\in \N)(f(n)=0)\di f(\mu(f))=0.
\ee
Secondly, consider the following axiom where the functional $\SS^{2}$ is often called \emph{the Suslin functional} (\cite{kohlenbach2, avi2, yamayamaharehare}):
\be\tag{$\SS^{2}$}
(\exists\SS:\N^{\N}\di \{0,1\})(\forall f \in \N^{\N})\big[  \SS(f)=0\asa ( \exists g \in \N^{\N})(\forall n \in \N)(f(\overline{g}n)=0) \big].
\ee
By definition, the Suslin functional $\SS^{2}$ can decide whether a $\Sigma_{1}^{1}$-formula in normal form, i.e.\ as in the right-hand side of $(\SS^{2})$, is true or false.   
The system $\FIVE^{\omega}\equiv \RCAo+(\SS^{2})$ proves the same $\Pi_{3}^{1}$-sentences as $\FIVE$ (see \cite{yamayamaharehare}).

\smallskip

Thirdly, we define the functional $\SS_{k}^{2}$ which decides the truth or falsity of $\Sigma_{k}^{1}$-formulas in normal form; we also define 
the system $\SIXK$ as $\RCAo+(\SS_{k}^{2})$, where  $(\SS_{k}^{2})$ expresses that $\SS_{k}^{2}$ exists.  
We define $\Z_{2}^{\omega}$ as $\cup_{k}\SIXK$ as one possible higher-order version of $\Z_{2}$.
The functionals $\nu_{n}$ from \cite{boekskeopendoen}*{p.\ 129} are just $\SS_{n}^{2}$ strengthened to return a witness (if existent) to the $\Sigma_{n}^{1}$-formula at hand.  
The operator $\nu_{n}$ is essentially Hilbert-Bernays' operator $\nu$ from \cite{hillebilly2}*{p.\ 479} restricted to $\Sigma_{n}^{1}$-formulas.

\smallskip

\noindent
Fourth, we introduce \emph{Kleene's quantifier $\exists^{3}$} as follows:
\be\tag{$\exists^{3}$}
(\exists E )(\forall Y:\N^{\N} \di \N)\big(E(Y)=0\asa\big[  (\exists f \in \N^{\N})(Y(f)=0)  \big]\big).
\ee
Both $\Z_{2}^{\Omega}\equiv \RCAo+(\exists^{3})$ and $\Z_{2}^\omega\equiv \cup_{k}\SIXK$ are conservative over $\Z_{2}$ (see \cite{hunterphd}).  
The functional $E$ from $(\exists^{3})$ is also called `$\exists^{3}$', and we use the same convention for other functionals.  Hilbert-Bernays' operator $\nu$ (see \cite{hillebilly2}*{p.\ 479}) is essentially Kleene's $\exists^{3}$, modulo a non-trivial fragment of the Axiom of (quantifier-free) Choice.

\section{A more negative result}\label{maink}
We establish the negative result regarding the uncountability of $\R$ sketched in Section \ref{aimo}.  
In particular, we show that $\Z_{2}^{\omega}+ \bigcup_{\sigma}~\QFAC^{\sigma, 1}$ does not prove $\NIN_{[0,1]}$.
To this end, we recall the model $\bf P$ introduced in \cite{dagsamX} in Section \ref{podel}.  We show in Section \ref{podel2} that ${\bf P}$ satisfies the aforementioned instances of the Axiom of Choice.

\subsection{A model of finite type arithmetic}\label{podel}
We introduce the model ${\bf P}$ from \cite{dagsamX} as it will be essential to our main result in Section \ref{podel2}.  
We first make our notion of `computability' precise as follows.  
\begin{enumerate}
\item[(I)] We adopt $\ZFC$, i.e.\ Zermelo-Fraenkel set theory with the Axiom of Choice, as the official metatheory for all results, unless explicitly stated otherwise.
\item[(II)] We adopt Kleene's notion of \emph{higher-order computation} as given by his nine clauses S1-S9 (see \cite{longmann}*{Ch.\ 5} or \cite{kleeneS1S9}) as our official notion of `computable'.
\end{enumerate}
For those familiar with \emph{Turing} computability, Kleene's S1-S9 in a nutshell is as follows: the schemes S1-S8 merely introduce (higher-order) primitive recursion, while S9 essentially states that the recursion theorem holds.

\smallskip

We refer to \cite{longmann} for a thorough and recent overview of higher-order computability theory.  
We do sketch one of our main techniques, called \emph{Gandy selection}.  
Intuitively speaking, this method expresses that S1-S9 computability satisfies an effective version of the Axiom of Choice.

\smallskip

Secondly, we have the following definition of `model'.  
\begin{definition}{\em A \emph{type structure} ${ \Tp}$ is a sequence $(\Tp[k])_{k \in \N}$ as follows.
\begin{itemize} 
\item $\Tp[0] = \N$.
\item For all $k \in \N$, $\Tp[k+1]$ is a set of functions $\Phi:\Tp[k] \rightarrow \N$.
\end{itemize}}
\end{definition} 
We note that $\Tp$ involves only total objects.  
The Kleene schemes can be interpreted for all type structures, by simply relativizing the definition. One of our applications of  type structures is that they serve as models for  fragments of higher-order arithmetic, structures over the \emph{language of finite types}. While the Kleene schemes are defined for pure types, the language of {finite} types is richer. However, assuming some modest closure properties of a type structure $ \Tp$, the extension to the finite types is unique (see \cite{longmann}*{\S4.2}). This is the case when $\Tp$ is \emph{Kleene closed} as in Definition \ref{uneek} right below.

\smallskip

\begin{definition}[Kleene computability]\label{uneek}
{\em  ~ \begin{itemize}
\item Let $\Tp$ be a type structure, let $\phi:\Tp[k] \rightarrow \N$, and let $\vec \Phi$ be in $\TP$.  We say that $\phi$ is \emph{Kleene computable} in $\vec \Phi$ (over $\TP$) if there is an index $e$ such that for all $\xi \in \Tp[k]$ we have that $\{e\}(\xi , \vec \Phi) = \phi(\xi)$.
\item  The type structure $\TP$ is \emph{Kleene closed} if for all $k$ and all $\phi:\Tp[k] \rightarrow \N$ that are Kleene computable in elements in $\TP$, we have that $\phi \in \Tp[k+1]$. 
\end{itemize}}
\end{definition}
When a type structure $\TP$ is Kleene closed, it will have a canonical extension to an interpretation $\Tp[\sigma]$ for all finite types $\sigma$.  
This is folklore and is discussed at length in \cite{longmann}*{\S 4.2}. 
We use $\TP^*$ to denote this unique extension. 
What is important to us is that if $\TP$ is Kleene closed, then $\TP^*$ is a model of $\RCA_0^\omega$ and all terms in G\"odel's $T$ have canonical interpretations in $\TP^*$.

\smallskip

Thirdly, the following (folklore) theorem shows that we have a high degree of flexibility when defining type structures from sets of functionals. 
\begin{theorem}\label{thm.ext} 
Let $A \subseteq \N^\N$ and let $B$ be a set of functionals $F:A \rightarrow \N$. 
Assume that all $f$ computable in a sequence from $B$ and $A$  are in $A$. 
Then there is a Kleene closed type structure $\TP$ such that $A = \Tp[1]$ and $B \subseteq \Tp[2]$.
\end{theorem}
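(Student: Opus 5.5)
We build $\TP$ level by level by transfinite closure, starting from $\Tp[0]=\N$ and $\Tp[1]=A$. The difficulty is that elements of $B$ are only defined on $A$, while a Kleene computation over the final structure may apply them only to elements of $\Tp[1]=A$. Hence the computations relative to $B$ are already well defined as soon as $\Tp[1]=A$. What must be ensured is that every type-1 object computable from the structure lies in $A$, and that every higher level contains everything computable over lower-level data. Our plan is to take levels $k\geq 2$ as large as possible subject to these constraints.

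First we set $\Tp[2]$ to be the set of all $F:A\to\N$ that are Kleene computable, over the partial structure, in finitely many elements of $B$ and $A$; this set contains $B$. For $k\geq 2$ we define $\Tp[k+1]$ inductively as the set of all $\phi:\Tp[k]\to\N$ Kleene computable in finitely many parameters from lower levels. The alternative of taking $\Tp[k+1]$ to be all functions $\Tp[k]\to\N$ is rejected, since arbitrary type-3 functionals could compute functions outside $A$; for example, $\exists^{3}$-like objects could produce new reals. We therefore restrict to computable objects, parameters from $B\cup A$, at every level.

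The key step is to show that the resulting structure is Kleene closed. The crucial point is the type-1 level: we must show that any $f$ computed over $\TP$ in parameters from $\TP$ lies in $A$. For this we prove, by induction on the level $k$, that every element of $\Tp[k]$ is of the form $\lambda\xi.\{e\}(\xi,\vec\Phi,\vec g)$ with $\vec\Phi$ from $B$ and $\vec g$ from $A$. We then show, by induction on the length of the computation tree (S1--S9), that a computation $\{e\}(\vec\Psi)$ with parameters $\vec\Psi$ from $\TP$ can be unwound into a computation $\{e'\}(\vec\Phi,\vec g)$ relative only to $B$ and $A$. The tool for the unwinding is the S-$m$-$n$ theorem and the recursion theorem, together with the substitution of the index representations of higher-type arguments.

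Applying S8 to an element represented by index $d$ is simulated by running $\{d\}$ on the argument, which is itself given as a computable functional. The main obstacle is exactly this unwinding. Its termination hinges on the fact that the representations are total on the relevant domains. Its well-foundedness requires arguing that the simulated computation tree is well-founded whenever the original one is, which we handle by a nested induction on ordinal heights of computations.

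Granting the unwinding, any $f$ computed in parameters from $\TP$ is computable in a sequence from $B$ and $A$, and hence lies in $A$ by hypothesis. Closure at the higher levels then holds by construction. This gives $\Tp[1]=A$ and $B\subseteq\Tp[2]$, as the theorem requires.
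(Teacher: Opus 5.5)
The paper states this result as folklore and gives no proof. Your construction is the standard argument: take $\Tp[k]$ for $k\geq 2$ to be the functionals computable in finitely many elements of $B\cup A$, and obtain Kleene closure by unwinding computations with parameters from $\Tp$ into computations from $B\cup A$. That unwinding is exactly Kleene's substitution theorem, which the paper itself invokes as a black box in the proof of Theorem~\ref{lemmaP}.

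The one point you should make precise is the shape of the nested induction. Simulating an application $\Psi(\eta)$ by running $\{d\}(\eta,\vec\Phi,\vec g)$ gives a computation that need not be shorter than the original, so induction on ordinal heights alone does not go through. The outer induction must instead be on the type of the functional being eliminated: $\eta$ has lower type than $\Psi$, and totality of $\Psi$ on $\Tp$ guarantees that this computation converges. The induction on ordinal heights then sits inside it.
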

Fourth, we introduce a version of \emph{Gandy selection}, first proved in \cite{supergandy}. 
Intuitively, $\lambda G.\{d\}(F, G)$ as in Theorem \ref{gandyselection} is a (partial) choice function with the biggest possible domain.
A functional $F^{2}$ is \emph{normal} if it computes $\exists^{2}$.  
\begin{theorem}[Gandy Selection]\label{gandyselection}
Let $F^{2}$ be normal. Let $A \subset \N \times \N^{\N^\N}$ and $e$ be such that $(a,G^2) \in A$ if and only if $\{e\}(F,G,a)$ terminates \($A$ is semi-computable in $F$\). Then there is an index $d$ such that $\{d\}(F,G)$ terminates if and only if there exists $a\in\N$ such that $(a,G) \in A$, and then $\{d\}(F,G)$ is one of these numbers.
\end{theorem}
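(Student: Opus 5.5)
The plan is to follow Gandy's original argument. It combines the \emph{stage comparison theorem} for computations relative to a normal functional with Kleene's recursion theorem (S9). Throughout, $F$ is normal, so $\exists^{2}$ and hence the $\mu$-operator $(\mu^{2})$ from (\ref{muf}) are Kleene computable in $F$. Write $\|\{e'\}(\vec\Phi)\|$ for the ordinal height of the computation tree of a terminating computation, and $\infty$ otherwise.

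\textbf{Step 1 (stage comparison).} First I would prove that there is, uniformly, a partial functional $\preceq$ computable in $F$ with the following behaviour. For computations $x=\{e_{1}\}(\vec\Phi)$ and $y=\{e_{2}\}(\vec\Psi)$, the value $x\preceq y$ terminates if and only if at least one of $x,y$ terminates. When it terminates, it returns $0$ exactly when $\|x\|\le\|y\|$. The proof defines $\preceq$ by the recursion theorem, by cases on the last S1--S9 clause applied in $x$ and in $y$, with a simultaneous induction on $\min(\|x\|,\|y\|)$.
\begin{itemize}
\item The clauses S1--S8 other than S8 at type $2$ reduce to finitely many comparisons of immediate subcomputations.
\item The crucial case is application of a type-2 argument $H$ (S8), where the immediate subcomputations are indexed by $n\in\N$. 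To decide, for instance, whether every subcomputation $x_{n}$ is shorter than $y$, one forms the function $n\mapsto (x_{n}\preceq y)$. By the induction hypothesis this function is total whenever the relevant side terminates, and then $\exists^{2}$, obtained from $F$, is applied to it.
\end{itemize}
This is exactly where normality of $F$ is used. I expect Step~1 to be the main technical obstacle. The delicate points are the bookkeeping for the S9 case and checking that every call made by $\preceq$ is total in the right situations, so that the induction on $\min(\|x\|,\|y\|)$ really goes through.

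\textbf{Step 2 (the selection index via self-reference).} By the recursion theorem, I would pick $d$ such that
\[
\{d\}(F,G)\simeq \mu a\,[\,p_{G}(a)=0\,],\qquad p_{G}(a)=\begin{cases}0 & \text{if } \{e\}(F,G,a)\prec\{d\}(F,G),\\ 1&\text{otherwise,}\end{cases}
\]
where $\prec$ is the strict version of the stage comparison from Step~1. The $\mu$-search is carried out by applying the $\mu$-operator obtained from $F$ to the function $p_{G}$. Note that $p_{G}(a)$ compares a computation with the computation $\{d\}(F,G)$ itself, which is currently being evaluated.

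\textbf{Step 3 (verification).} Suppose first that some $a$ satisfies $(a,G)\in A$, and assume towards a contradiction that $\{d\}(F,G)$ diverges. Then for every $b$ the comparison $p_{G}(b)$ is decided by the convergence or divergence of $\{e\}(F,G,b)$ alone, so $p_{G}$ is total. Moreover $p_{G}(a)=0$, so the $\mu$-search succeeds and $\{d\}(F,G)$ terminates, which is the desired contradiction. Once $\{d\}(F,G)$ terminates, its output $a_{0}$ satisfies $p_{G}(a_{0})=0$. This means $\{e\}(F,G,a_{0})$ terminates, indeed with height below $\|\{d\}(F,G)\|$, and so $(a_{0},G)\in A$.

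Conversely, if $\{d\}(F,G)$ terminates, its value is some $a_{0}$ with $(a_{0},G)\in A$ by the same reasoning, so some witness exists. For the ordinal bookkeeping to be legitimate, one has to observe that the subcomputations $p_{G}(b)$ are genuinely part of the computation tree of $\{d\}(F,G)$. This is what makes the self-referential comparison consistent: the comparison of $\{e\}(F,G,a)$ against $\{d\}(F,G)$ is only ever resolved at stages below $\|\{d\}(F,G)\|$.
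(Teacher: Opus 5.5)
\textbf{The gap.} Step 3 claims that if $\{d\}(F,G)$ diverges then $p_G$ is total, and this is false. By your own Step 1, the comparison $x\preceq y$ terminates only if at least one of $x,y$ terminates. So if $\{d\}(F,G)$ diverges, then $p_G(b)$ diverges for every $b$ with $(b,G)\notin A$. It is not ``decided'' to be $1$, because divergence of $\{e\}(F,G,b)$ is exactly what cannot be detected.

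Hence whenever the set of witnesses $\{a:(a,G)\in A\}$ is a nonempty proper subset of $\N$, the function $\lambda a.p_G(a)$ is partial. The $\mu$-operator you apply to it (ultimately an S8 application of a type $2$ functional) then diverges, so the assumed divergence of $\{d\}(F,G)$ is perfectly self-consistent. Kleene computations are the least solutions of their defining equations, so nothing forces $\{d\}(F,G)$ to terminate. The direction ``a witness exists $\Rightarrow$ $\{d\}(F,G)$ terminates'' is therefore unproved. The other direction of your Step 3 is fine. The underlying problem is that a single global $\mu$-search needs a total predicate, and comparing each candidate with the search itself does not make it total.

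\textbf{The fix.} Search sequentially, comparing each candidate only with the remainder of the search. By the recursion theorem pick $d'$ with
\[
\{d'\}(F,G,a)\simeq
\begin{cases}
a & \text{if } \{e\}(F,G,a)\preceq\{d'\}(F,G,a+1) \text{ returns } 0,\\
\{d'\}(F,G,a+1) & \text{if it returns } 1,
\end{cases}
\]
and set $\{d\}(F,G)\simeq\{d'\}(F,G,0)$.

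\emph{Termination.} Suppose $(a_0,G)\in A$. The comparison at $a_0$ terminates because its left side does, so $\{d'\}(F,G,a_0)$ terminates. In the second case this is because $\|\{d'\}(F,G,a_0+1)\|<\|\{e\}(F,G,a_0)\|<\infty$. Then argue by finite downward induction: for each $a<a_0$ the comparison at $a$ terminates because its right side does. So $\{d\}(F,G)$ terminates.

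\emph{Correctness.} By induction on the length of $\{d'\}(F,G,a)$, every value it returns is a witness. In the first case the comparison terminated with $\|\{e\}(F,G,a)\|\le\|\{d'\}(F,G,a+1)\|$, which forces the left side to be finite. In the second case apply the induction hypothesis to the shorter subcomputation $\{d'\}(F,G,a+1)$.

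No $\mu$-operator is needed here; normality of $F$ enters only through your Step 1 (stage comparison). The paper itself does not prove the theorem but cites the literature for it.
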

\begin{proof}
See \cite{longmann}*{\S5.4} or \cite{Sacks.high}*{p.\ 245}.
\end{proof}
\begin{remark}\label{remark1}{\em We need this strong version of Gandy selection in the proof of Lemma~\ref{3}. 
When $F$ is normal, computing relative to $F$ and $G^2$ satisfies \emph{stage comparison}, a soft requirement for Gandy selection. 
Replacing $G$ with an arbitrary functional of type $\geq 3$, stage comparison will not be available anymore, and thus neither Gandy selection. 
We can of course replace $\N^{\N^\N}$ by $\N^\N$ in Theorem \ref{gandyselection}, as we do in the proof of Corollary \ref{cor.inj}.
}\end{remark}
Several  of our applications of Gandy selection are based on the following corollary. 
Intuitively speaking, the functional $G$ computes an $F$-index for $f$.
\begin{corollary}\label{cor.inj} 
Let $F^{2}$ be normal. Then there is a partial functional $G$ computable in $F$ which terminates if and only if the input $f\in \N^{\N}$ is computable in $F$, and such that we have $G(f) = e\di (\forall a\in \N)(f(a) = \{e\}(F,a))$.  
\end{corollary}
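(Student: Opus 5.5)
We will apply Gandy selection (Theorem \ref{gandyselection}) in the type-1 form mentioned in Remark \ref{remark1}. Fix a normal $F^{2}$. Let $A\subseteq \N\times\N^{\N}$ consist of the pairs $(e,f)$ such that $\{e\}(F,a)$ is defined for every $a\in\N$ and satisfies $\{e\}(F,a)=f(a)$. The corollary then follows once two things are checked: that $A$ is semi-computable in $F$, and that the selected index has the required property.

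\emph{Semi-computability of $A$.} We need an index $e_{0}$ such that $\{e_{0}\}(F,f,e)$ terminates exactly when $(e,f)\in A$. The first idea is to let $\{e_{0}\}(F,f,e)$ search, for each $a$, for the value $\{e\}(F,a)$ and compare it with $f(a)$. The difficulty is that this involves infinitely many subcomputations, and we want it to terminate only when all of them converge with the correct values.

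To handle this, we use the normality of $F$ in the standard way. Consider the partial function $a\mapsto \{e\}(F,a)$. By S9 together with $\exists^{2}$, which $F$ computes, the $\lambda a$-abstraction can be fed to $\exists^{2}$ as a single object. Concretely, define
\[
h(a)=\begin{cases}0 & \text{if } \{e\}(F,a)=f(a),\\ 1 & \text{otherwise,}\end{cases}
\]
and compute $h(a)$ by first computing $\{e\}(F,a)$ and then comparing. Now apply $\exists^{2}$ to $h$, obtained via $F$, and test whether $h$ is identically $0$; if it is, terminate, and otherwise diverge.

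Kleene's semantics makes an application of $F$ (or $\exists^{2}$) to $\lambda a.h(a)$ converge only if $h$ is total. So this computation terminates if and only if every $\{e\}(F,a)$ converges and agrees with $f(a)$, that is, if and only if $(e,f)\in A$. This is the step that needs care: it depends on the convention that S8 application requires totality of the argument function. That convention is exactly what makes the set of indices of total functions semi-computable, as opposed to merely co-semi-computable. I expect this to be the main obstacle to state cleanly, although it is standard in \cite{longmann}*{Ch.\ 5}.

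\emph{Selection.} Gandy selection now gives an index $d$ such that $\{d\}(F,f)$ terminates if and only if there is some $e$ with $(e,f)\in A$, and in that case the value is one such $e$. Put $G(f):=\{d\}(F,f)$. This $G$ is computable in $F$. It terminates exactly when $f$ is computable in $F$, since $f$ is computable in $F$ precisely when some $e$ satisfies $\{e\}(F,a)=f(a)$ for all $a$. Moreover, whenever $G(f)=e$ we have $(e,f)\in A$, so $f(a)=\{e\}(F,a)$ for all $a\in\N$, as required.
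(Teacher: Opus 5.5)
Your proposal is correct and is essentially the paper's proof: the paper notes that, for normal $F$, the relation $(\forall a\in\N)(f(a)=\{e\}(F,a))$ is semi-computable in $F$ and then applies Gandy selection in the type-1 form of Remark~\ref{remark1}, and your argument spells out that semi-computability in detail. One small precision: the totality check is cleanest if you apply $F$ itself to $\lambda a.h(a)$ via S8, as the paper does for \textsf{TOT} in the proof of Theorem~\ref{lemmaP}, because an index computing $\exists^{2}$ from $F$ need not diverge on every partial input (your procedure is still sound, since by monotonicity it can only halt with the verdict `$h$ is identically $0$' when $h$ is total).
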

\begin{proof} When $F$ is normal, the relation $(\forall a \in \N)(f(a) = \{e\}(F,a))$ is clearly\\ semi-computable, and we can apply Gandy Selection.\end{proof}
Note that the functional $G$ is always injective.
Of course, these results are equally valid for all Kleene closed type structures, and we may replace $\N^{\N^\N}$ in Theorem \ref{gandyselection} by any finite product of pure types $\leq 2$.

\smallskip

Next, we define the Kleene closed type structure ${\bf P}$ which is crucial for the below independence results involving $\Z_{2}^{\omega}$.
We note that ${\bf P}$ is constructed under the set-theoretical assumption that $\textsf{V = L}$.  
There is no harm in this, since what is of interest is the logic of the structure, which statements are true and which are false, and our results will not depend on the assumption that $\textsf{V = L}$; they are proved within $\ZF$.
Now recall the functionals $\SS^{2}_{k}$ from Section \ref{prelim}. We use the assumption $\textsf{V = L}$ motivated by the following fact from set theory.
\begin{lemma}[$\textsf{V = L}$] Let $A \subseteq \N^\N$ be closed under computability relative to all $\SS^{2}_{k}$. Then all $\Pi^1_n$-formulas are absolute for $A$ for all $n$. \end{lemma}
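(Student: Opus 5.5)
Assuming $\textsf{V = L}$, I would show by induction on $n$ that $\Pi^1_n$- (equivalently $\Sigma^1_n$-) formulas with parameters from $A$ are absolute between $A$ and $\N^\N$. The key fact from set theory is the Novikov--Kondo--Addison uniformization theorem in $L$: since $\textsf{V = L}$ yields a $\Sigma^1_2$-good wellordering of $\N^\N$, for every $n\geq 2$ the pointclass $\Sigma^1_n$ has the uniformization property. For $n=1$ we use Kondo's classical $\Pi^1_1$-uniformization, so every $\Sigma^1_2$ relation has a $\Sigma^1_2$ uniformizing function, and hence $\Delta^1_2$ in the graph. Moreover, for $n\geq 2$ under $\textsf{V=L}$ the uniformizing function of a $\Pi^1_{n}$ relation $R(x,y)$ can be chosen so that, for $x$ in the domain, the witness is $\Delta^1_{n+1}$ in $x$. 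Indeed, take the $<_L$-least witness: ``$y$ is $<_L$-least with $R(x,y)$'' is $\Pi^1_n\wedge(\forall z<_L y)\neg R(x,z)$, and since the initial segment $\{z : z<_L y\}$ is countable and uniformly $\Delta^1_2$ in $y$, this quantifier costs nothing beyond level $n+1$.

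The induction step runs as follows. The arithmetical case is trivial, because $A$ is closed under $\exists^2$ (computable from $\SS^2_1$) and arithmetical truth about elements of $A$ is the same in both structures. Suppose $\Pi^1_n$-formulas are absolute for $A$, and consider $(\exists y)\varphi(x,y)$ with $\varphi\in\Pi^1_n$ and $x\in A$. If a witness exists in $A$, it is a true witness by the induction hypothesis. Conversely, if a witness exists in $\N^\N$, the $<_L$-least witness $y_0$ is $\Delta^1_{n+1}$ in $x$. A real that is $\Delta^1_{n+1}$ in $x$ is computable from $x$ and $\SS^2_{n+1}$, since $\SS^2_{n+1}$ decides each $\Sigma^1_{n+1}$ statement ``$y_0(k)=m$'' uniformly in $k,m$. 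Hence $y_0\in A$. By the induction hypothesis $\varphi(x,y_0)$ also holds in $A$, so $\Sigma^1_{n+1}$, and dually $\Pi^1_{n+1}$, formulas are absolute. The case $n=0\to1$ needs no uniformization: a $\Sigma^1_1$ fact about $x\in A$ is already decided by $\SS^2$, and one can extract a witness, e.g.\ the leftmost path of the relevant tree, arithmetically in $\SS^2$ and $x$.

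The main obstacle is the uniform definability claim: checking that under $\textsf{V = L}$ the $<_L$-least witness of a $\Pi^1_n$ relation is really $\Delta^1_{n+1}$ in the parameter. This needs the standard facts that $<_L$ restricted to reals is $\Sigma^1_2$ and that ``$z<_L y$'' quantifies over a set uniformly coded by a real that is $\Delta^1_2$ in $y$. Then $(\forall z<_L y)\neg R(x,z)$ becomes a number quantifier over such a code, which stays within $\Pi^1_n$ for $n\geq 2$ and within $\Delta^1_2$ for $n=1$. I would cite this from Moschovakis or Kanamori rather than reprove it. Finally, the step ``$\Delta^1_{n+1}$ in $x$ implies computable in $\SS^2_{n+1},x$'' requires the normal-form translation of $\Sigma^1_{n+1}$ formulas, which is routine.
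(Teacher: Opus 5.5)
Your proof is correct and follows the same route as the paper's much terser argument. Levels up to $\Pi^1_2$ come from basis results that need no \textsf{V = L} (the leftmost path via the Suslin functional, and Kondo's uniformization); the higher levels come from the $\Delta^1_2$-good wellordering of $L$, which makes the $<_L$-least witness $\Delta^1_{n+1}$ in the parameter and hence computable in $\SS^2_{n+1}$ and the parameter. One harmless slip: since $\neg R$ is $\Sigma^1_n$, the bounded quantifier $(\forall z<_L y)\neg R(x,z)$ is $\Sigma^1_n$ rather than $\Pi^1_n$, and this still gives the $\Delta^1_{n+1}$ bound you need.
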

\begin{proof} 
For $n \leq 2$, this is a general fact independent of the assumption \textsf{V = L}, and for $n > 2$ it  is a consequence of the existence of a $\Delta^1_2$-well-ordering of $\N^\N$. 
\end{proof}
\begin{definition}[$\textsf{V = L}$]\label{defP}{\em 
Let $\SS^{2}_{\omega}$ be the join of all $\SS^{2}_{k}$, and let ${\bf P}$ be the Kleene closed  type-structure, as obtained from Theorem \ref{thm.ext}, where ${\bf P}[1]$ is the set of functions computable in $\SS^{2}_{\omega}$ and the restriction of $\SS^{2}_{\omega}$ to ${\bf P}[1]$ is in ${\bf P}[2]$.}
\end{definition}
The model ${\bf P}$, under another name, has been used to prove \cite{dagsamV}*{Theorem 4.3}.
Recall the unique extension $\TP^{*}$ of $\TP$ introduced below Definition \ref{uneek}.
The principle $\NIN_{2^{\N}}$ expresses that there is no injection from Cantor space to the naturals.  
\begin{thm}\label{lemmaP}
The type structure  ${\bf P}^*$ derived  from ${\bf P}$ as defined above is a model for $\Z_{2}^{\omega} + \QFAC^{0,1}+\neg\NIN_{2^{\N}}$. 
 \end{thm}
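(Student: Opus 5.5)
We verify the three components of Theorem \ref{lemmaP} separately, each time working in ${\bf P}^*$, which is a model of $\RCAo$ by Kleene closure.

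\emph{$\Z_2^\omega$.} For each $k$, the restriction of $\SS^2_\omega$ to ${\bf P}[1]$ lies in ${\bf P}[2]$, so the restriction of $\SS^2_k$ is Kleene computable in it and also lies in ${\bf P}[2]$. It remains to check that this restriction satisfies the defining axiom $(\SS^2_k)$ inside ${\bf P}^*$. In the model, the quantifiers over $\N^\N$ range only over ${\bf P}[1]$. Since ${\bf P}[1]$ is closed under computability relative to all $\SS^2_k$, the lemma above (using $\textsf{V = L}$) says that all $\Pi^1_n$-formulas are absolute for ${\bf P}[1]$. Hence a $\Sigma^1_k$-formula in normal form with parameters in ${\bf P}[1]$ holds in the model if and only if it holds in $V$. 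So the true $\SS^2_k$ decides it correctly in ${\bf P}^*$, and $\SIXK$ holds for every $k$.

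\emph{$\QFAC^{0,1}$.} Suppose ${\bf P}^*\models(\forall n^0)(\exists g^1)\varphi(n,g)$ with $\varphi$ quantifier-free. The parameters are of finite type in ${\bf P}^*$, but only their behaviour on the relevant arguments matters. I will reduce to the case where the parameters are computable in $\SS^2_\omega$ together with type-1 data, or absorb higher-type parameters into the normal functional $F$. The relation ``$g$ is computable in $F$ via index $e$, and $\varphi(n,g)$ holds'' is semi-computable in $F = \SS^2_\omega$ and the parameters. Since every $g\in{\bf P}[1]$ has such an index, Gandy selection (Theorem \ref{gandyselection}) yields a computable $n\mapsto e_n$. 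Then $\Phi(n)=\lambda a.\{e_n\}(F,a)$ is computable in $F$ and the parameters, so it lies in ${\bf P}[1]$ and is the required choice function. This is the step I expect to be the main obstacle, because parameters of type $\geq 3$ spoil stage comparison (Remark \ref{remark1}). I will therefore check that the canonical extension lets one replace them by type-2 objects of ${\bf P}$ restricted to ${\bf P}[1]$. Alternatively, one can avoid Gandy selection here altogether: every element of ${\bf P}[1]$ is $\Delta^1_\infty$, so one can take the $\textsf{L}$-least witness, which is uniformly computable from $\SS^2_\omega$.

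\emph{$\neg\NIN_{2^\N}$.} Let $G$ be the partial functional of Corollary \ref{cor.inj} for $F=\SS^2_\omega$. It is defined on every $f\in{\bf P}[1]$, and it is injective there. Being computable in $F$, its restriction to ${\bf P}[1]$ is a total element of ${\bf P}[2]$. Restricted to Cantor space, it is therefore an injection $2^\N\to\N$ inside ${\bf P}^*$, which refutes $\NIN_{2^\N}$.
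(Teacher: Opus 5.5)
Your three-part route is the paper's: absoluteness of $\Pi^1_n$-formulas under $\textsf{V = L}$ for $\Z_2^\omega$, Gandy selection over $\SS^2_\omega$-indices for $\QFAC^{0,1}$, and Corollary~\ref{cor.inj} for $\neg\NIN_{2^\N}$. The $\Z_2^\omega$ and $\neg\NIN_{2^\N}$ parts are fine as written.

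The gap is in $\QFAC^{0,1}$: the step you call the main obstacle is left open. It closes with one observation, which the paper uses. By construction of ${\bf P}$, every object of ${\bf P}^*$, at every type, is Kleene computable in $\SS^2_\omega$. The substitution theorem then makes the relation $\{(n,g):\varphi(n,g,\vec\Phi)\}$ computable in $\SS^2_\omega$ alone. Hence the set of pairs $(n,e)$ such that $\lambda a.\{e\}(\SS^2_\omega,a)$ is total and witnesses $\varphi$ for $n$ is semi-computable in the single normal type-2 functional $\SS^2_\omega$. No object of type $\geq 3$ occurs, so Remark~\ref{remark1} does not apply and Theorem~\ref{gandyselection} can be used directly. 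There is nothing to ``absorb'' into $F$, and no need to replace parameters by type-2 objects restricted to ${\bf P}[1]$.

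Drop the fallback via $\textsf{L}$-least witnesses. Its premise is false: ${\bf P}[1]$ contains non-projective functions. For example, the function sending codes of sentences of second-order arithmetic to their truth values is computable in $\SS^2_\omega$, since sentences can be effectively put into $\Sigma^1_k$ normal form. By Tarski's theorem, however, it lies in no $\Delta^1_k$. Moreover, the matrix $\varphi(n,g,\vec\Phi)$ is in general only computable in $\SS^2_\omega$, not projective. So $\SS^2_\omega$ cannot directly decide $(\exists g)$-statements about it, let alone locate its $<_{\textsf{L}}$-least witness uniformly in $n$. Finding some witness index is exactly the job Gandy selection does, so this route does not avoid it.
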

\begin{proof} 
We assume that $\textsf{V = L}$, which implies that all $\Pi^1_n$-formulas are absolute for ${\bf P}[1]$.  Since ${\bf P}[1]$ is closed under computability relative to each $\SS^{2}_k$, we have that ${\bf P}[1]$ satisfies all $\Pi^1_n$-comprehension axioms. Now assume that $(\forall n^{0}) (\exists f^{1}) Q(n,f,\vec \Phi)$  is true in ${\bf P}$, where $Q$ is quantifier-free and $\vec \Phi$ is a list of parameters from ${\bf P}$.  
Since all functionals in $\vec{\Phi}$  are computable in $\SS^2_\omega$, the set 
\[
S := \{(n,f)\in \N\times\N^{\N} : Q(n,f,\vec{\Phi})\} 
\]
is computable in $\SS^2_\omega$. Here, we use the substitution theorem for S1-S9 from \cite{kleeneS1S9} relativised to the model $\bf P$.
Further, the set \textsf{TOT} of indices $e^{0}$ for total functions $f\in \N^{\N}$ relative to any total functional $F^2$ is semi-computable in $F$ since
\[
e \in \textsf{\textup{TOT}} \asa  \big[F\big(\lambda a^0.\{e\}(a,F) \big)\textup{ terminates}\big].
\]
As a consequence, and technically by a second use of the aforementioned substitution theorem, the following set is semi-computable in $\SS^{2}_{\omega}$:
 \[
 R:=\{(n, e)\in \N^{2} : (\exists f \in \N^{\N})[ Q(n,f,\vec \Phi)\wedge (\forall a \in \N)(f(a) = \{e\}(\SS_\omega^{2},a))] \}.
 \]
 Moreover, we have that  $(\forall n^{0}) (\exists e^{0})[(n,e)  \in R]$.
 By assumption and Gandy selection, there is a function $g$ computable in $\SS^{2}_{\omega}$ such that $R(n,g(n))$ for all $n^{0}$. 
 If $G(n)$ is the function $f$ computed from $\SS^{2}_{\omega}$ with index $g(n)$, we have that $G^{0\di 1} \in \bf P$ witnesses this instance of quantifier-free choice.
 
 \smallskip
 
Regarding $\NIN_{2^{\N}}$, recall that $\mathbf{P}$ consists of all objects computable in $\SS_{\omega}^{2}$, the union of all $\SS_{k}^{2}$.  
Thus, for any $f\in 2^{\N}\cap \mathbf{P}$ there is some (unique minimal) $e_{f}\in \N$ such that the $e_{f}$-th Kleene algorithm computes $f$ in terms of $\SS_{\omega}^{2}$; by Gandy selection, the choice function $\lambda f^{1}.e_{f}$ is already part of $\mathbf{P}$ and provides an injection from $2^{\N}\cap \mathbf{P}$ to $\N$, i.e.\ $\NIN_{2^{\N}}$ is false in $\mathbf{P}$.  
More formally, ${\bf P}[2]$ contains a functional $G:A \rightarrow \N$ that is \emph{injective}, which is a direct consequence of Corollary~\ref{cor.inj} using $\SS^{2}_\omega$ for $F$.
 \end{proof} 
 Finally, it is straightforward to prove the equivalence $\NIN_{[0,1]}\asa \NIN_{2^{\N}}$, say over $\ACAo$ (see \cite{samcie22} for a proof).  
 In particular, the latter system provides a uniform mechanism for converting reals in $[0,1]$ to binary representation.

\subsection{Main result}\label{podel2}
We show that $\NIN_{[0,1]}$ cannot be proved in $\Z_{2}^{\omega}+\bigcup_{\sigma}\QFAC^{\sigma, 1}$ by showing that the model ${\bf P}$ from Section \ref{podel} satisfies the latter but not the former.

\smallskip

First of all, we establish two lemmas needed for our main result.
Recalling that the only predicate of our language is equality at base type $0$, the following claim is just an observation that does not require a proof.
\begin{lemma}\label{2} 
Let $\Delta$ be a quantifier-free formula with higher-order parameters $\vec \Phi$ and free variables $x^\tau$ and $y^1$. Then there is a functional $\Psi^{\tau\di 2}$, computable in $\vec \Phi$, such that for each $\phi^\tau$ and $f^1$ we have
\[
\Delta(\phi,f,\vec \Phi) \leftrightarrow \Psi(\phi)(f) = 0.
\]
\end{lemma}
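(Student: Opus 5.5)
The plan is a structural induction on the quantifier-free formula $\Delta$, after first putting it in a normal form. In the language of finite types the only predicate is $=_0$. Hence every atomic subformula of $\Delta$ has the form $s =_0 t$, where $s,t$ are terms of type $0$ built from variables ($x^\tau$, $y^1$, the parameters $\vec\Phi$ and possibly number variables), the constants of the base theory (Gödel-$T$ style combinators and recursors), and application. $\Delta$ itself is then a Boolean combination of finitely many such atoms.

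First, each type-$0$ term $t(x,y,\vec\Phi)$ is interpreted by a functional computable in $\vec\Phi$ as a function of $(x,y)$. The reason is that every constant of $\RCAo$'s language (in particular the primitive-recursion constants) has a canonical interpretation that is Kleene computable. Moreover, application, $\lambda$-abstraction over $x$ and $y$, and composition preserve S1-S9 computability; these are the schemes S1-S8 together with the substitution theorem \cite{kleeneS1S9} already invoked in the proof of Theorem \ref{lemmaP}. This step is a routine induction on term formation, and it is where the assumption that we work in a Kleene closed $\TP^*$ enters.

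Second, an atom $s=_0t$ is turned into the characteristic term $\chi_{=}(s,t)$, with value $0$ iff $s=t$ and $1$ otherwise. The Boolean connectives become primitive recursive operations on these $0/1$ values, for instance $\max$ for $\vee$, $\cdot$ for $\wedge$ and $1\dot- (\cdot)$ for $\neg$. This yields a single type-$0$ term $t_\Delta(x,y)$, computable in $\vec\Phi$, such that $\Delta(x,y,\vec\Phi)\leftrightarrow t_\Delta(x,y)=0$. If other free variables occur, they are treated as further parameters in $\vec\Phi$.

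Finally, set $\Psi:=\lambda\phi^\tau.\lambda f^1.\,t_\Delta(\phi,f)$, which has type $\tau\to 2$ since $1\to 0$ is type $2$. By construction $\Psi$ is computable in $\vec\Phi$, and $\Delta(\phi,f,\vec\Phi)\leftrightarrow\Psi(\phi)(f)=0$ holds for all $\phi$ and $f$. The only point needing any care is the first step, namely that all term constants and the currying steps are Kleene computable in the type structure at hand. This is folklore via \cite{longmann}*{\S4.2}, so I expect no real obstacle: the statement is essentially an observation.
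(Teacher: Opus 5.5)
Your proposal is correct and is the same argument the paper has in mind. The paper gives no proof at all, calling the lemma an observation because $=_0$ is the only predicate, and your induction over type-$0$ terms and Boolean combinations simply spells this out. One small slip: with $0$ coding `true', $\max$ interprets $\wedge$ and multiplication interprets $\vee$, not the other way round.
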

Clearly, Lemma \ref{2} is also valid in $\bf P$, and the same is true for the next one.
\begin{lemma}\label{3} 
Let $F$ be a normal functional of type 2. Then there is a partial functional $\Theta$ of type $2 \rightarrow 1$ such that for all $G$ of type 2, the following are equivalent:
\begin{itemize}
\item {the function $\Theta(G)$ is total and $G(\Theta(G)) = 0$},
\item {there exists a $f^1$ computable in $F$ and $G$ such that $G(f) = 0$. }
\end{itemize}
\end{lemma}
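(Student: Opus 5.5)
The plan is to use Gandy selection (Theorem \ref{gandyselection}) with $F$ normal and $G^2$ the variable argument. Then to take $\Theta(G)$ to be the function computed from $F,G$ by the selected index.

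\emph{Step 1: the relation.} Consider
\[
A:=\{(e,G)\in \N\times \N^{\N^\N} : \lambda a.\{e\}(F,G,a) \textup{ is total and } G(\lambda a.\{e\}(F,G,a))=0\}.
\]
I claim $A$ is semi-computable in $F$, uniformly in $G$. Define $\{e'\}(F,G,e)$ as follows: apply $G$ to the partial function $\lambda a.\{e\}(F,G,a)$, and if this terminates with value $0$, terminate. The key point, exactly as in the proof of Theorem \ref{lemmaP} for \textsf{TOT}, is that $G$ is a total type-2 functional. Hence the S8-application $G(\lambda a.\{e\}(F,G,a))$ terminates iff the inner function is total. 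This uses the convention for S1--S9 that an application of a type-2 argument to a partial function diverges unless the function is total. So termination of $\{e'\}(F,G,e)$ is equivalent to $(e,G)\in A$. To make ``terminate only if the value is $0$'' into an S1--S9 computation, I would compose with a case distinction that loops otherwise, say via the recursion theorem (S9). This gives semi-computability.

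\emph{Step 2: Gandy selection.} Since $F$ is normal, Theorem \ref{gandyselection} gives an index $d$ such that $\{d\}(F,G)$ terminates iff $(\exists e)((e,G)\in A)$, and then $\{d\}(F,G)$ is such an $e$. Remark \ref{remark1} notes that stage comparison is available because $G$ has type 2. Define
\[
\Theta(G):=\lambda a.\{\{d\}(F,G)\}(F,G,a),
\]
which is partial computable in $F$ and $G$, so $\Theta$ is a partial functional of type $2\to 1$.

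\emph{Step 3: the equivalence.} Suppose some $f$ computable in $F,G$ has $G(f)=0$. Then $f=\lambda a.\{e\}(F,G,a)$ for some $e$ with $(e,G)\in A$, so $\{d\}(F,G)$ terminates with some $e_0$ satisfying $(e_0,G)\in A$. It follows that $\Theta(G)$ is total and $G(\Theta(G))=0$. Conversely, if $\Theta(G)$ is total with $G(\Theta(G))=0$, then $\{d\}(F,G)$ must have terminated. Hence $f:=\Theta(G)$ is computable in $F,G$ and witnesses the second bullet.

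\emph{Main obstacle.} The delicate step is Step 1. One must justify that the semi-computability is genuinely uniform in $G$ and that the totality test is legitimate inside S1--S9. This rests on the totality of $G$ together with the correct use of the substitution theorem. It is exactly here that $G$ must be of type 2 rather than higher, matching Remark \ref{remark1}.
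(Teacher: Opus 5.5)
Your proposal is correct and follows the paper's proof. The paper likewise takes the set of indices $e$ for which $\lambda a.\{e\}(a,F,G)$ is total and mapped to $0$ by $G$, notes that it is semi-computable in $F$ uniformly in $G$, applies Gandy selection, and composes the selector with $e\mapsto\lambda a.\{e\}(a,F,G)$. One small correction to your closing remark: the restriction of $G$ to type $2$ matters for Gandy selection in Step 2 (stage comparison, Remark~\ref{remark1}), not for the totality test in Step 1, since S8 makes that test semi-computable at any type.
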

\begin{proof}
This is proved via Gandy selection. For $G$ of type 2, define
\[
A_{G}:=\big\{e\in \N: \textup{$\lambda a^{0}.\{e\}(a,F,G)$ is total and $G$ maps the former to $0$}\big\}. 
\]
Clearly, $A_{G}$ is semi-computable in $F$ and $G$, uniformly in $G$.
By Gandy selection, there is a partial computable functional $\xi$ selecting $e_{0}\in A_{G}$ whenever the latter is nonempty. Composing $\xi$ with $e \mapsto \lambda a.\{e\}(a,F,G)$ yields the required $\Theta$.
\end{proof}
We may now prove the main theorem of this section. 
\begin{thm}\label{masterpiece}
The system $\Z_{2}^{\omega}+ \bigcup_{\sigma}~\QFAC^{\sigma, 1}$ does not prove $\NIN_{[0,1]}$. 
\end{thm}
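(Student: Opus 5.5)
We will show that the model ${\bf P}^*$ from Definition \ref{defP} satisfies every instance of $\QFAC^{\sigma,1}$. Theorem \ref{lemmaP} already gives $\Z_{2}^{\omega}+\neg\NIN_{2^{\N}}$ in ${\bf P}^*$, and $\NIN_{[0,1]}\asa\NIN_{2^{\N}}$ holds over $\ACAo$, which is contained in $\Z_{2}^{\omega}$. So ${\bf P}^*$ is then a model of $\Z_{2}^{\omega}+\bigcup_{\sigma}\QFAC^{\sigma,1}+\neg\NIN_{[0,1]}$, and the theorem follows. All of this can be carried out under $\textsf{V = L}$, since only the existence of such a model matters.

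Fix a finite type $\sigma$ and a quantifier-free $\varphi(x^\sigma,y^1,\vec\Phi)$ with parameters $\vec\Phi$ in ${\bf P}^*$. Assume ${\bf P}^*\models(\forall x^\sigma)(\exists y^1)\varphi(x,y,\vec\Phi)$.
\begin{enumerate}
\item[(a)] Apply Lemma \ref{2} inside ${\bf P}^*$. This gives $\Psi^{\sigma\di 2}\in{\bf P}^*$, computable in $\vec\Phi$, with $\varphi(x,f,\vec\Phi)\asa\Psi(x)(f)=0$. For each $x\in{\bf P}^*[\sigma]$, the functional $G_x:=\Psi(x)$ is then a type-2 object of ${\bf P}^*$.
\item[(b)] Since ${\bf P}[1]$ consists exactly of the functions computable in $\SS^2_\omega$, the witness $f\in{\bf P}[1]$ with $G_x(f)=0$ given by the hypothesis is computable in $\SS^2_\omega$, hence computable in $F:=\SS^2_\omega$ and $G_x$. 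This $F$ is normal.
\item[(c)] Lemma \ref{3} now yields $\Theta$ with $\Theta(G_x)$ total and $G_x(\Theta(G_x))=0$. Define $\Phi(x):=\Theta(\Psi(x))$.
\end{enumerate}

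It remains to check that $\Phi\in{\bf P}^*[\sigma\di1]$. The type-$(\sigma\di 1)$ objects of ${\bf P}^*$ are, via the canonical extension, those total maps on ${\bf P}^*[\sigma]$ Kleene computable relative to elements of ${\bf P}$. The map $\Phi$ is the composition of the partial computable $\Theta$, which is relative to $\SS^2_\omega\in{\bf P}[2]$, with $\Psi$, which is computable in $\vec\Phi$. It is total on ${\bf P}^*[\sigma]$ by (b)--(c). Kleene closure of ${\bf P}$ then gives $\Phi\in{\bf P}^*$, and $\Phi$ witnesses this instance of $\QFAC^{\sigma,1}$.

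The main obstacle is the last step. Lemma \ref{3} and Gandy selection are stated for type-2 inputs $G$, and Remark \ref{remark1} warns that Gandy selection fails for arguments of type $\geq3$. So we must not feed $x^\sigma$ or $\Psi$ directly into the selection. Instead we select uniformly in the type-2 object $\Psi(x)$ and only afterwards compose with $x\mapsto\Psi(x)$. Care is also needed because Lemma \ref{3} is applied in the relativised setting of ${\bf P}$. There, "computable in $F,G$" must be read over ${\bf P}$, and $\Theta$ must be checked to be total on each $G_x$ that arises, which is exactly what (b) provides. For $\sigma$ not pure, we first reduce to pure types by the standard coding of finite types in Kleene closed structures, or simply note that $\Psi(x)$ is type 2 regardless of $\sigma$, so no reduction is needed.
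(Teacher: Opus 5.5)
Your proof is correct and is essentially the paper's argument: Lemma~\ref{2} turns the matrix into the type-2 functional $\Psi(x)$, Lemma~\ref{3} (Gandy selection at type~2) with $F=\SS^2_\omega$ supplies $\Theta$, and $\lambda x^{\sigma}.\Theta(\Psi(x))$ lies in ${\bf P}$ by Kleene closure. Beyond the paper, you spell out two points it leaves implicit: each witness lies in ${\bf P}[1]$ and is therefore computable in $\SS^2_\omega$, which is exactly what makes Lemma~\ref{3} applicable to $\Psi(x)$; and $\neg\NIN_{2^{\N}}$ transfers to $\neg\NIN_{[0,1]}$ via the equivalence over $\ACAo$.
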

\begin{proof}
We let ${\bf P}$ be the model from Definition \ref{defP}.  
By Theorem \ref{lemmaP}, ${\bf P}$ satisfies $\Z_{2}^{\omega}+\neg\NIN_{[0,1]}$.  
Now assume that ${\bf P} \models \forall x^\tau \exists y^1\Delta(x,y ,\vec \Phi)$ where $\Delta$ is quantifier-free. 
We work inside $\bf P$ and use the above lemmas.  Consider $\Psi$ provided by applying Lemma \ref{2} for $\Delta$; $\Psi$ is computable in $\SS^2_\omega$ since all parameters in $\Delta$ are  computable in $\SS^2_\omega$.  Similarly, apply Lemma \ref{3} for $F = \SS^2_\omega$ and let $\Theta$ be the resulting functional.
By assumption on $\Delta$, we have $(\forall x^{\tau})\Delta(x, \Theta(\Psi(x)), \vec \Phi)$.  Moreover, $\lambda x^{\tau}.\Theta(\Psi(x))$ is computable in $\SS_{\omega}^{2}$, and therefore already included in {\bf P}.  
\end{proof}
\begin{remark}\label{remark2}
{\em  We cannot replace $\QFAC^{\sigma, 1}$ by $\QFAC^{\sigma, \tau}$ for types $\tau$ of any higher order in the above argument for two reasons. One reason is discussed in Remark~\ref{remark1}. The other reason is that a normal functional of type 2, like $\SS_\omega^{2}$, can decide equality over $\N^\N$, but not for types at higher levels.
}\end{remark}
The theorem has some interesting implications for the following fragment of the Axiom of dependent Choice.  
\begin{princ}[$\textsf{QF}$-$\DC^{1,1}$] Let $\varphi$ be quantifier-free and such that $(\forall n\in \N)(\forall f\in \N^{\N})(\exists g\in \N^{\N})\varphi(n, f, g)$.  
Then there is $(f_{n})_{n\in \N}$ such that $(\forall n\in \N)\varphi(n, f_{n}, f_{n+1})$.  
\end{princ}
The axiom $\QFDC^{1,1}$ is similar in kind to Kohlenbach's fragment of countable choice $\QFAC^{0,1}$ from \cite{kohlenbach2}.
We now have the following immediate corollary.  
\begin{cor}\label{flegm}
The system $\Z_{2}^{\omega}+\QFDC^{1,1}$ cannot prove $\NIN_{[0,1]}$.
\end{cor}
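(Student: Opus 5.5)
The plan is to derive Corollary \ref{flegm} from Theorem \ref{masterpiece}. We show that $\QFDC^{1,1}$ follows from $\QFAC^{1,1}$ together with $\QFAC^{0,1}$ over $\RCAo$, or at least in the model ${\bf P}$. Since ${\bf P}$ satisfies $\Z_{2}^{\omega}+\bigcup_{\sigma}\QFAC^{\sigma,1}+\neg\NIN_{[0,1]}$, it then satisfies $\Z_{2}^{\omega}+\QFDC^{1,1}+\neg\NIN_{[0,1]}$, and the corollary follows.

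The derivation runs as follows. Assume $(\forall n\in\N)(\forall f\in\N^{\N})(\exists g\in\N^{\N})\varphi(n,f,g)$ with $\varphi$ quantifier-free. Code the pair $(n,f)$ as a single element of $\N^{\N}$, say $h=\langle n\rangle * f$. The hypothesis then takes the form $(\forall h^{1})(\exists g^{1})\varphi(h(0),\lambda k.h(k+1),g)$, which is still quantifier-free. Applying $\QFAC^{1,1}$ yields $\Phi^{1\di 1}$ with $(\forall n)(\forall f)\varphi(n,f,\Phi(\langle n\rangle *f))$.

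We then define the sequence by primitive recursion in Gödel's $T$: fix any $f_{0}$ (e.g.\ $\lambda k.0$) and set $f_{n+1}:=\Phi(\langle n\rangle * f_{n})$. As a type $0\di 1$ object this is $\lambda n.\,R(f_{0},\lambda m f.\Phi(\langle m\rangle*f))(n)$, using the recursor at type $1$, which is available in $\RCAo$ and hence in ${\bf P}^{*}$ since ${\bf P}$ is Kleene closed. Quantifier-free induction on $n$, or simply the defining equations of the recursor, then gives $(\forall n)\varphi(n,f_{n},f_{n+1})$, so $\QFDC^{1,1}$ holds.

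No real obstacle is expected. The only point requiring care is that the recursor of type $1$ exists in the model, and that the pairing and projection operations preserve quantifier-freeness of $\varphi$. Both hold because ${\bf P}^{*}$ interprets all terms of Gödel's $T$.
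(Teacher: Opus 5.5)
Your proposal is correct and follows the paper's proof: apply $\QFAC^{1,1}$ to the hypothesis of $\QFDC^{1,1}$, then iterate the resulting choice functional with the type-1 recursor $\mathbf{R}_{1}$, which exists in $\mathbf{P}^{*}$ because G\"odel's $T$ is part of S1--S9 computability. Your explicit coding of $(n,f)$ as $\langle n\rangle * f$ is a harmless refinement that the paper leaves implicit, and your opening mention of $\QFAC^{0,1}$ is superfluous, since you never use it.
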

\begin{proof}
We shall show that $\QFDC^{1,1}$ follows from $\QFAC^{1,1}$ and $({\bf R}_{1})$, where the latter formalises primitive recursion of type $1\di 1$ objects.
The recursor constant $\mathbf{R}_{\sigma}$ is defined as follows in general:
\be\label{specialty}\tag{${\bf R}_{\sigma}$}
\mathbf{R}_{\sigma}(f, g, 0):= g \textup{ and } \mathbf{R}_{\sigma}(f, g, n+1):= f(n, \mathbf{R}_{\sigma}(f, g, n)), 
\ee
where $g^{\sigma}$ and $f^{(0\times \sigma)\di \sigma}$ are arbitrary and $\sigma$ is any finite type. 
Since G\"odel's $T$ is included in S1-S9 computability, the corollary follows from the (proof of the) theorem. 
For fixed quantifier-free $\varphi$, apply $\QFAC^{1,1}$ to 
\[
(\forall n\in \N)(\forall f\in \N^{\N})(\exists g\in \N^{\N})\varphi(n, f, g).
\]
Let $\lambda n.\lambda f.\Phi(n, f)$ be the resulting functional and use $\textbf{R}_{1}$ to define $f_{0}:=00\dots$ and $f_{n+1}:= \Phi(n, f_{n})$, as required for $\QFDC^{1,1}$.  
\end{proof}
In conclusion, Theorem \ref{masterpiece} suggests that we may use $\cup_{\sigma}\QFAC^{\sigma, 1}$ and $\QFDC^{1,1}$ in the RM of $\NIN_{[0,1]}$ and beyond, which is the content of the following section.

\smallskip

\section{Some applications}\label{torralf}
We discuss how quantifier-free choice as in $\QFAC^{\sigma, 1}$ is useful in higher-order RM.  
For instance, showing that a set is uncountable can be done in various ways and converting between these seems to require fragments of $\QFAC^{\sigma, 1}$.

\smallskip

In more detail, we establish new equivalences for $\NIN_{[0,1]}$ using $\QFAC^{\sigma, 1}$ in Section \ref{klapper} and provide an overview of how $\QFAC^{\sigma, 1}$ has been (or can be) used in RM in Section \ref{fruitcake}.  
We stress that the negative results in Section~\ref{podel2} justify the use of $\RCAo+\QFAC^{\sigma, 1}$ as a base theory for the RM of $\NIN_{[0,1]}$ and stronger principles.
We also obtain an equivalence for $\QFDC^{1,1}$ in Theorem \ref{chuchu}.

\subsection{Quantifier-free choice in the base theory}\label{klapper}
We establish some equivalences for $\NIN_{[0,1]}$ using $\QFAC^{\sigma,1}$ in the base theory.
We shall consider metric and second-countable spaces, which have been previously studied in RM (\cites{simpson2, damurm, samSECOND, samHARD,sammetric}). 
The associated definitions in higher-order RM are the textbook ones (see \cites{samSECOND, samHARD, sammetric}), i.e.\ we shall not introduce the former as they are well-known.

\smallskip

First of all, the following result seems to crucially depend on $\QFAC^{1, 1}$; item \eqref{tajel1} is well-known from textbooks \(\cite{munkies}*{Ch.~3, Prob.~4} and \cite{rudin}*{Ch.\ 2, Ex.\ 19}).  
\begin{thm}[$\ACAo+\QFAC^{1,1}$]\label{fijally}
The following are equivalent. 
\begin{enumerate}
\renewcommand{\theenumi}{\alph{enumi}}
\item The uncountability of the reals as in $\NIN_{[0,1]}$.
\item For $X\subset \R$, if $f:X\di [0,1]$ is surjective, then $X$ is uncountable$^{\ref{pufferfish}}$.\label{tajel0}
\item Let $(M, d)$ be a metric space with $M\subset \R$ and let $A\subset M$ be connected and satisfy $|A|\geq 2$. \label{tajel1}
Then $A$ is  uncountable\footnote{Similar to $\NIN_{[0,1]}$, `$A$ is uncountable' for $A\subset M$ in a metric space $(M, d)$ means that there is no injection from $A$ to $\N$, i.e.\ for any $Y:M\di \N$, there are $x, y\in A$ with $x\ne_{M}y\wedge Y(x)=Y(y)$.  Note that $d(x,y)=_{\R}0\asa x=_{M}y$ by the definition of metric space.\label{pufferfish}}.
\item Let $(M, d)$ be a connected metric space with $M\subset \R$ and $f:M\di \R$ continuous.  \label{tajel2}
If $f(M)$ exists and has two distinct elements, it is uncountable.  
\item The previous item for $f:M \di M_{0}$ where $(M_{0}, d_{0})$ is any metric space. \label{tajel3}
\item Let $(M_{i}, d_{i})$ for $i=0,1$ be metric spaces with $M_{i}\subset \R$, $|M_{0}|\geq 2$, and $M_{0}$ connected, and let $f:M_{1}\di M_{0}$ be surjective. Then $M_{1}$ is uncountable. \label{tajel1337}
\item Any connected \emph{functionally Hausdorff}\footnote{A topological space $X$ is functionally Hausdorff if for any distinct $x, y\in X$, there is continuous $f:X\di [0,1]$ such that $f(x)=0$ and $f(y)=1$.  
} second-countable space over the reals with at least two points is uncountable.\label{tajel4}
\end{enumerate}
We do not need $\QFAC^{1,1}$ to prove item \eqref{tajel2}.
\end{thm}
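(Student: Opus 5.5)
We will prove the equivalences via a cycle of implications, routing most items through $\NIN_{[0,1]}$ and using $\QFAC^{1,1}$ exactly where a non-constructive witness must be turned into a functional.

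\textbf{Downward implications from $\NIN_{[0,1]}$.} We first show (a) $\di$ (b). Let $f:X\di[0,1]$ be surjective and suppose $Y:\R\di\N$ is injective on $X$. Surjectivity gives $(\forall x\in[0,1])(\exists z\in X)(f(z)=_{\R}x)$. Using $\exists^{2}$, the matrix $z\in X\wedge f(z)=_{\R}x$ becomes quantifier-free; here reals are coded as elements of $\N^{\N}$ and $F_X$ is used. Then $\QFAC^{1,1}$ yields a right inverse $g:[0,1]\di X$. Hence $Y\circ g$ is an injection from $[0,1]$ to $\N$, contradicting $\NIN_{[0,1]}$.

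The same argument gives (a) $\di$ (f), since a surjection onto $M_0$ composed with an injection $M_1\di\N$ yields an injection $M_0\di\N$. We then need (a) $\di$ (c) for connected $A$ with distinct points $a,b$. The map $x\mapsto d(a,x)$ is continuous, so by connectedness its image contains $[0,d(a,b)]$. This step is the intermediate value argument for connected sets; we prove it in $\ACAo$ by contradiction: if some value $r$ is missed, $\{x: d(a,x)<r\}$ and $\{x:d(a,x)>r\}$ separate $A$. Rescaling, we obtain a map from a subset of $A$ onto $[0,1]$, and we then apply the argument of (b), again using $\QFAC^{1,1}$ for the right inverse.

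Items (d) and (e) follow in the same way, with $f(M)$ connected as a continuous image and $d_0(f(a),\cdot)$ used to reach $[0,1]$. For (d) the target is already $\R$. There we avoid choice: we pick $x\in(f(a),f(b))$ preimages directly via a countable approximation. Alternatively, we reduce (d) to $\NIN$ for an interval in $f(M)$, which is what the claim ``no $\QFAC^{1,1}$ for (d)'' requires. For (g), we use the functionally Hausdorff condition to obtain continuous $h:X\di[0,1]$ with $h(x)=0$ and $h(y)=1$. Connectedness forces $h$ to be surjective onto $[0,1]$, and we conclude as in (b).

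\textbf{Reversals.} Each of (b)--(g) trivially implies (a) by specialising to $X=M=A=[0,1]$ with the usual metric. This space is connected (provable in $\ACAo$), functionally Hausdorff via $x\mapsto|x-x_0|$ normalised, and second-countable with rational intervals as basis; the surjection is the identity.

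\textbf{Main obstacle.} The main obstacle is formalising ``connected'' and ``$f(M)$ exists'' in higher-order RM so that the intermediate-value step goes through in $\ACAo$ without extra comprehension, given that sets are characteristic functionals. We handle this by working with the separation by two open sets defined via $\exists^{2}$ from $d$. The second delicate point is checking that the choice instance is genuinely quantifier-free after $\exists^{2}$ is used to decide $=_{\R}$ and membership.
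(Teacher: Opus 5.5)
Your proposal is correct and follows essentially the paper's proof. In both, (b), (c), (f) and (g) compose a $\QFAC^{1,1}$-right inverse with the putative injection; the separation by $\{x: d(a,x)<r\}$ and $\{x: d(a,x)>r\}$ shows every intermediate value is attained; (e) and (f) go back to the argument for (c); (d) is choice-free because $f(M)\subset\R$ contains the interval between two of its points; and the reversals specialise to $[0,1]$. Drop your first suggestion for (d), picking preimages ``via a countable approximation'': choosing preimages uniformly is exactly an instance of $\QFAC^{1,1}$ and is unjustified, whereas your alternative (restricting an injection on $f(M)$ to that interval, as the paper does) needs no choice.
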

\begin{proof}
That items \eqref{tajel0}-\eqref{tajel4} imply $\NIN_{[0,1]}$ is straightforward.  
For the implication $\NIN_{[0,1]}\di \eqref{tajel0}$, let $X\subset \R$ and $f:X\di [0,1]$ be as in the latter and apply $\QFAC^{1,1}$ to $(\forall r\in [0,1])(\exists x\in X)(f(x)=_{\R}r)$ to obtain $\Phi^{1\di 1}$.
In case $Y:\R\di \N$ is injective on $X$, $\lambda r. Y(f(r))$ is injective on $[0,1]$, as required.

\smallskip

For the implication $\NIN_{[0,1]}\di \eqref{tajel1}$, suppose the latter is false, i.e.\ let $A\subset M$ be countable and connected.  
Let $a, b\in A$ be distinct points and consider the following
\be\label{texxxo}
(\exists \eps \in (0, d(a, b)))(\forall x\in A)( d(a, x)\ne_{\R} \eps ).
\ee
Let $\eps$ be as in \eqref{texxxo} and consider the following open sets:
\[
U:=\{x\in A: d(a, x)<\eps\} \textup{ and } U:=\{x\in A: d(a, x)>\eps\}
\]
Since $U\cup V=A$ and $U, V\ne \emptyset$, this contradicts the connectedness of $A$, i.e.\ \eqref{texxxo} must be false.  
Now apply $\QFAC^{1, 1}$ to the negation of \eqref{texxxo}, as follows:
\be\label{texxxo2}
(\forall \eps \in (0, d(a, b)))(\exists x\in A)( d(a, x)=_{\R} \eps ),
\ee
and let $\Phi^{1\di  1}$ be the resulting choice function.  
Since $A$ is countable, there is $Y:M\di \N$ that is injective on $A$. 
Then $\lambda x. Y(\Phi(x))$ is an injection from the real interval $(0, d(a, b))$ to $\N$, which contradicts $\NIN_{[0,1]}$.

\smallskip

For the implication $\NIN_{[0,1]}\di \eqref{tajel2}$, we prove that for continuous $f:M\di \R$, $f(M)$ is connected if $M$ is.  
Indeed, let $M$ be connected and suppose $f(M)=U\cup V$ where $U, V\subset \R$ are open and non-empty.  
Now consider $X= f^{-1}(U)$ and $Y= f^{-1}(V)$, which are open as topological and `epsilon-delta' continuity are equivalent by definition in $\RCAo$.    
Since $M$ is connected, $M=X\cup Y$ implies that there is $z\in X\cap Y$, yielding that $f(z)\in U\cap V$, as required for the connectedness of $f(M)$.  
If $|f(M)|>2$, then there are $x_{0}, y_{0}\in M$ such that $f(x_{0})\ne_{\R} f(y_{0})$. To show that $(f(x_{0}), f(y_{0}))\subset f(M)$, suppose $w\in (f(x_{0}), f(y_{0}))$ and $w\not \in f(M)$.  
Then $U=\{x\in M:  f(x)>_{\R}w\}$ and $V=\{x\in M:f(x)<_{\R}w\}$ are open since $f$ is continuous, contradicting that $M$ is connected.  Hence, $f(M)$ contains an interval and is therefore uncountable by $\NIN_{[0,1]}$, as required.   
For item \eqref{tajel3}, use the same proof to show that $f(M)$ is connected and apply item \eqref{tajel1}.

\smallskip

To prove item \eqref{tajel4}, let $X$ be as in the latter, i.e.\ for any two distinct points $x, y\in X$ there is continuous $f:X\di [0,1]$ such that $f(x)=0$ and $f(y)=1$.  
Now if $x_{0}\in (0,1)$ is not in the range of $f$, then the open sets $U=f^{-1}([0,x_{0}))$ and $f^{-1}((x_{0}, 1])$ are non-empty, disjoint, and such that $X=U\cup V$, a contradiction.
Now apply $\QFAC^{1,1}$ to $(\forall r\in [0,1])(\exists x\in X)(f(x)=_{\R} r)$ to obtain $\Phi^{1\di 1}$ such that $x=\Phi(r)$ in the previous formula.  
Now, if $Y:X\di \N$ is injective, then so is $Z:[0,1]\di \N$ defined by $Z(r):= Y(\Phi(r))$, contradicting $\NIN_{[0,1]}$.

\smallskip

To prove item \eqref{tajel1337}, let $f:M_{1}\di M_{0}$ be as in the latter, i.e.\ $(\forall y\in M_{0})(\exists x\in M_{1})(f(x)=y)$; applying $\QFAC^{1,1}$ yields $\Phi^{1\di 1}$ such that $y=\Phi(x)$ in the latter.  
In case $(M_{1}, d)$ is countable, there is $Y:\R\di \N$ such that $Y(x)=Y(y)$ implies $x=_{M_{1}}y$ for any $x, y\in M_{1}$.  Now define $Z:\R\di \N$ as $\lambda x.Y(\Phi(x)) $, which satisfies $Z(x)=Z(y)\di x=_{M_{0}}y$ for all $x, y\in M_{0}$, essentially by definition.  Hence $(M_{0}, d)$ is countable, contradicting item \eqref{tajel1} as required.  
\end{proof}
We can replace the existential quantifier in \eqref{texxxo2} by `$(\exists! x\in A)$', where uniqueness is however expressed relative to `$=_{M}$'.   
We also obtain weaker results as follows.
\begin{cor}[$\ACAo$] 
Items \eqref{tajel0}-\eqref{tajel4} can be proved for `uncountable' replaced by `non-enumerable'.  
\end{cor}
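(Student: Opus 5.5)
The plan is to repeat the proof of Theorem \ref{fijally} without $\QFAC^{1,1}$, and to check at each step that the weaker conclusion `non-enumerable' survives. Here `$X$ is non-enumerable' means: for any sequence $(x_{n})_{n\in\N}$ of reals there is $y\in X$ with $y\ne_{\R}x_{n}$ for all $n$.

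The downward implications (items \eqref{tajel0}--\eqref{tajel4} imply the non-enumerable version of $\NIN_{[0,1]}$) are immediate, since $[0,1]$ or a surjective copy of it is a special case. So the work lies in deriving each item from the statement `$[0,1]$ is non-enumerable', which is provable in $\ACAo$ by Cantor's diagonal argument; in fact it is provable already in $\RCA_{0}$ via nested intervals. This means each item can be proved outright in $\ACAo$ rather than merely shown equivalent.

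The key point is that non-enumerability goes in the direction that needs no choice. Suppose $A$ or $X$ is enumerated by $(x_{n})_{n\in\N}$ and $f$ is a map with $f(X)\supseteq[0,1]$ or $f(A)\supseteq(0,d(a,b))$. Then $(f(x_{n}))_{n\in\N}$ is a sequence that hits every point of the interval, which contradicts non-enumerability of the interval. Forming this sequence uses only the term $\lambda n.f(x_{n})$; no choice function $\Phi$ is required. The steps are as follows.
\begin{itemize}
\item Item \eqref{tajel0}: apply $f$ directly to the enumeration.
\item Item \eqref{tajel1}: use the map $x\mapsto d(a,x)$ together with the connectedness argument around \eqref{texxxo}, which shows $(0,d(a,b))\subseteq\{d(a,x):x\in A\}$.
\item Items \eqref{tajel2} and \eqref{tajel3}: reuse the proof that $f(M)$ is connected, and compose with the previous item.
\item Item \eqref{tajel1337}: if $M_{1}$ is enumerated, then $(f(x_{n}))_{n\in\N}$ enumerates $M_{0}$, and we conclude by item \eqref{tajel1}.
\item Item \eqref{tajel4}: the functionally Hausdorff $f$ maps $X$ onto $[0,1]$ by the connectedness argument, so $(f(x_{n}))_{n\in\N}$ enumerates a superset of $[0,1]$.
\end{itemize}

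I expect the main obstacle to be item \eqref{tajel4} and the metric-space items. There, an enumeration of a point set given by representatives must be shown to yield a sequence of reals in $\ACAo$, and $d$ and $f$ must respect $=_{M}$. Since $f$ and $d$ are extensional functionals in the model, composing with a sequence is just a term, so this should be routine. Finally, I need to check that the connectedness arguments in the proof of Theorem \ref{fijally} used no choice; they only define open sets via $\exists^{2}$ and characteristic functions.
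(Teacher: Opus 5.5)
Your proposal is correct and follows the paper's proof. As there, you assume an enumeration $(x_{n})_{n\in\N}$ and push it forward along $f$ (or $d(a,\cdot)$), using only the choice-free surjectivity and connectedness arguments, to get a sequence covering an interval; this contradicts the $\RCA_{0}$-theorem that no sequence enumerates $[0,1]$, so each item is proved outright in $\ACAo$. The one point to tidy is that for the metric-space items `enumerable' should be read relative to $=_{M}$ rather than $=_{\R}$, and this is exactly where the extensionality of $d$ and $f$ with respect to $=_{M}$, which you flag, is used.
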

\begin{proof}
We first consider item \eqref{tajel0} formulated with `$X$ is non-enumerable'.  
Now if $X$ were enumerable, say given by the sequence $(x_{n})_{n\in \N}$, the surjectivity of $f:X\di \R$ implies $(\forall r\in [0,1])(\exists n\in \N )(f(x_{n})=_{\R}r)$.
Hence, $(f(x_{n}))_{n\in \N}$ enumerates $[0,1]$, which even $\RCA_{0}$ disproves (see \cite{simpson2}*{II.4.9}).
A similar proof works for item \eqref{tajel1}: if $A$ is given by a sequence $(a_{n})_{n\in \N}$, then \eqref{texxxo2} implies 
\be\label{texxxo25}
(\forall \eps \in (0, d(a, b)))(\exists n\in \N)( d(a, a_{n})=_{\R} \eps ).
\ee
Hence, the interval $(0, d(a, b))$ can be enumerated via $(d(a, a_{n}))_{n\in \N}$, a contradiction as for item \eqref{tajel0}.
The other items are proved in the same way.   
\end{proof}
Next, we have the following theorem where we recall that $({\bf R}_{1})$ formalises primitive recursion of type $1\di 1$ objects (see Corollary \ref{flegm}).  
Item~\eqref{slafke} is studied in textbooks (\cite{munkies}*{p.\ 176}) and second-order RM via codes (\cite{simpson2}*{II.5.9}).
\begin{thm}[$\ACAo+\QFAC^{1,1}+({\bf R}_{1})$]\label{dstiff} The following are equivalent.  
\begin{enumerate}
\renewcommand{\theenumi}{\alph{enumi}}
\item The uncountability of the reals as in $\NIN_{[0,1]}$.
\item A perfect set of reals is not countable. \label{slafke0}
\item A complete metric space $(M, d)$ with $M\subset \R$ and with \emph{no isolated points}, is uncountable$^{\ref{pufferfish}}$.\label{slafke}
\item The previous item restricted to \emph{separable} spaces. \label{slafke2} 
\item Let $(M_{i}, d_{i})$ be complete metric spaces without isolated points and $M_{i}\subset \R$. For continuous $f: M_{0} \di M_{1}$, if the set $f(M_{0})$ exists, it is uncountable.\label{ucomo} 
\item The previous item restricted to sequential continuity. \label{zefferinthesky}
\end{enumerate}
We do not need $({\bf R}_{1})$ to establish item \eqref{slafke0} or \eqref{slafke2}.  
\end{thm}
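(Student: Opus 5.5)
The plan is to prove a cycle of implications over $\ACAo+\QFAC^{1,1}+({\bf R}_{1})$. The reverse directions are easy: $[0,1]$ with the usual metric is a complete, separable metric space with $M\subset\R$ and no isolated points. It is also a perfect set. The identity on $[0,1]$ is continuous, and even sequentially continuous, with range $[0,1]$. Hence each of items \eqref{slafke0}--\eqref{zefferinthesky} yields $\NIN_{[0,1]}$ directly. The work is in the forward directions. There we assume $\NIN_{[0,1]}$ and a countable object $X$ of the relevant kind, witnessed by $Y:\R\di\N$ injective on $X$. The goal is to build a functional $Z:[0,1]\di\N$ that is injective, contradicting $\NIN_{[0,1]}$.

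The central construction is a Cantor-scheme embedding of $2^{\N}$ into $X$, which by $\NIN_{[0,1]}\asa\NIN_{2^{\N}}$ over $\ACAo$ suffices. For item \eqref{slafke}, completeness and the absence of isolated points give
\[
(\forall x\in M)(\forall k\in\N)(\exists y\in M)\big(0<d(x,y)<2^{-k}\big).
\]
Using $\exists^{2}$ to make the matrix decidable, $\QFAC^{1,1}$ yields a choice functional $\Phi(x,k)$. From $\Phi$ we define, for a finite binary string $s$, points $x_{s}$ with $x_{s*0}:=x_{s}$ and $x_{s*1}:=\Phi(x_{s},k_{s})$. Here the radius $k_{s}$ is chosen, computably via $\mu^{2}$, so that $k_{s}$ exceeds a bound making $d(x_{s*0},x_{s*1})$ much larger than all later perturbations. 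The definition of $s\mapsto x_{s}$ and of the radii is a primitive recursion on the length of $s$ with values in type $1$, which is exactly where $({\bf R}_{1})$ is used. For $\alpha\in2^{\N}$, completeness gives the limit $x_{\alpha}:=\lim_{n}x_{\overline{\alpha}n}$. Separation of distinct branches follows from the radius choice via a geometric-series estimate. Therefore $\alpha\mapsto Y(x_{\alpha})$ is an injection $2^{\N}\di\N$.

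For item \eqref{slafke2}, where $M$ is separable, I avoid $\QFAC^{1,1}$ and ${\bf R}_{1}$. With a dense sequence $(q_{n})$ and $\mu^{2}$, one chooses the splitting points as the least-indexed $q_{n}$ in a suitable annulus. This is an arithmetical search, so the recursion is second-order and available in $\ACAo$. Item \eqref{slafke0} is similar: a perfect set of reals comes with a countable dense subset of $\R$, namely the rationals, to search. Here one picks rational balls meeting the set and uses closedness to land the limit in the set, so ${\bf R}_{1}$ is again avoided. Items \eqref{ucomo} and \eqref{zefferinthesky} reduce to item \eqref{slafke}. For these, one builds the Cantor scheme in $M_{0}$ so that the images under $f$ stay separated, or one notes that $f(M_{0})$ inherits perfectness. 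A cleaner route is to choose preimages via $\QFAC^{1,1}$ and compose with $Y$, as in the proof of Theorem~\ref{fijally}.

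The main obstacle is items \eqref{ucomo}--\eqref{zefferinthesky}. The image $f(M_{0})$ need not be complete, or even free of isolated points, for an arbitrary continuous $f$. So the injectivity has to be transported back, rather than proved for the image directly. My first attempt is to build the Cantor scheme in $M_{0}$ while keeping $f(x_{s*0})\ne f(x_{s*1})$, using continuity to fix radii so that images of subtrees stay separated. Sequential continuity suffices for passing to limits along branches. If some $f$ is locally constant, this fails, and I would then have to argue that the intended statement needs $f(M_{0})$ to have at least two points. In that case I would fall back on a connectedness-style argument as in Theorem~\ref{fijally}.
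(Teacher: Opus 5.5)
Your reverse directions and your argument for (a)$\to$(c) follow the paper. You take a choice functional for ``no isolated points'' from $\QFAC^{1,1}$, with the matrix made quantifier-free using $\exists^{2}$. You build a Cantor scheme by a type-$1$ recursion using $({\bf R}_{1})$, and you take limits along branches.

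The genuine gap is items (e) and (f). You give no argument for them, and the routes you sketch cannot work.
\begin{itemize}
\item Choosing preimages with $\QFAC^{1,1}$ runs the wrong way. A section $\Phi:f(M_{0})\to M_{0}$ turns an injection on $M_{0}$ into one on $f(M_{0})$. So it transfers uncountability from $f(M_{0})$ to $M_{0}$, as in item (f) of Theorem~\ref{fijally}, never conversely.
\item The connectedness fallback is unavailable, because $M_{0}$ is not assumed connected.
\end{itemize}
In fact no argument can work. Item (e) is refuted in $\ACAo$ by $M_{0}=M_{1}=[0,1]$ and $f\equiv 0$. The image $\{0\}$ exists and has no two distinct points, so it is not uncountable in the sense of footnote~\ref{pufferfish}. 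Demanding two image points does not help: take $M_{0}=[0,1]\cup[2,3]$ with $f$ equal to $0$ on $[0,1]$ and $1$ on $[2,3]$. Since $\NIN_{[0,1]}$ holds in the full type structure while (e) and (f) fail there, these items are not equivalent to (a) as stated.

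The paper reduces (e) to (c) via the claim that, by continuity of $f$ and completeness of $M_{0}$, the set $f(M_{0})$ is complete and has no isolated points. That is precisely the false step you suspected. So you should turn your suspicion into this counterexample plus a repaired hypothesis, not into further proof attempts. Two possible repairs:
\begin{itemize}
\item Assume $f(M_{0})$ is closed in $M_{1}$ and has no isolated points; then the item is just (c).
\item Assume $f$ is open; then $f(M_{0})$ contains a ball of $M_{1}$, and the scheme of (c) runs inside a closed sub-ball.
\end{itemize}

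There are three smaller points. First, in (c) you say completeness ``gives'' $x_{\alpha}$. To make $\alpha\mapsto Y(x_{\alpha})$ a functional, you also need a choice functional for completeness; this is the paper's $\Phi_{1}$, obtained via $\QFAC^{1,1}$ on the arithmetical matrix. The reason is that $d$ is an arbitrary metric on $M\subset\R$, so the limit cannot in general be computed from the sequence using $\exists^{2}$.

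Second, your claim that (d) avoids $\QFAC^{1,1}$ is therefore unjustified. The dense sequence only removes $({\bf R}_{1})$, which is all the theorem asserts.

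Third, for (b), ``picking rational balls meeting the set'' presupposes deciding $P\cap B(q,r)\neq\emptyset$. That is a quantifier over $\R$, which $\exists^{2}$ does not supply for a set given by a characteristic function. The paper simply obtains (b) as an instance of (c), since a closed set of reals with the Euclidean metric is complete.
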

\begin{proof}
That $\NIN_{[0,1]}$ follows from the other items is straightforward.    We now prove that the former implies item \eqref{slafke} (and item \eqref{slafke0}). 
To this end, we first observe two applications of $\QFAC^{1,1}$ for a metric space $(M, d)$ as in item \eqref{slafke}:
\begin{itemize}
\item  $(M, d)$ has no isolated points, i.e.\ $(\forall x\in M, k\in \N)(\exists y\in M)(0< d(x, y)< \frac{1}{2^{k}} )$.  Hence, there is $\Phi_{0}^{1\di 1}$ such that $y=_{1}\Phi_{0}(x, k)$ in the latter. 
\item  $(M, d)$ is complete, i.e.\ we have
\be\label{Komplete}\textstyle
(\forall (x_{n})_{n\in \N})(\exists x\in M)[  \textup{$(x_{n})_{n\in \N}$ is Cauchy } \di  \lim_{n\di \infty }x_{n}=_{M}  x ].
\ee
The formula in square brackets in \eqref{Komplete} is arithmetical, i.e.\ there is $\Phi_{1}^{1\di 1}$ such that $x=_{1}\Phi_{1}(\lambda n.x_{n})$ in \eqref{Komplete}.

\end{itemize}
One standard technique to show that $M$ is uncountable is then as follows:  one constructs an injection $f:2^{\N}\di M$ based on the below items.  
\begin{itemize}  
\item Use $\Phi_{0}$ and ${\bf R}_{1}$ to define a sequence of pairwise different points $(x_{\sigma})_{\sigma \in 2^{<\N}}$ in $M$ such that $x_{\sigma*i*j} \in B(x_{\sigma*i}, \frac{1}{2}d(x_{\sigma*0}, x_{\sigma*1}))$.  
\item For $\alpha \in 2^{\N}$, define $f(\alpha)=x_{\alpha}$ in case $\alpha $ has a tail of zeros.  Otherwise, use $\Phi_{1}$ to define $f(\alpha)$ as the limit of the sequence $(x_{\overline{\alpha}n})_{n\in \N}$ in $M$.  
\item Verify that $f:2^{\N}\di M$ is an injection `by definition'.  
\end{itemize}
Since the space $2^{\N}$ does not really involve representations (in contrast to $\R$), the issue of extensionality relative to $=_{M}$ does not pose problems.  
For item \eqref{slafke2} (and the final sentence of the theorem) observe that the sequence $(x_{\sigma})_{\sigma \in 2^{<\N}}$ is readily defined in terms of a dense subsequence of $M$, say over $\ACAo$.

\smallskip

Finally, let $(M_{i}, d_{i})$ and $f:M_{0}\di M_{1}$ be as in item \eqref{ucomo}.  By the continuity of $f$ and the completeness of $M_{0}$, $f(M_{0})$ is complete and cannot have isolated points.  
One seems to need $\QFAC^{0,1}$ for the latter completeness.  Hence, item \eqref{slafke} implies item~\eqref{ucomo}, using $\QFAC^{1,1}$.   
For item \eqref{zefferinthesky}, the proof using sequential continuity is essentially the same, again using $\QFAC^{0,1}$ in an essential way.  
\end{proof}
Next, we formulate an equivalence for $\QFDC^{1,1}$ based on \cites{bish1, heerlijkheid}, where we focus on what is exactly needed for equivalences to $\QFDC^{1,1}$.  It should be possible to obtain more equivalences based on the results in \cite{samBIG2, dagsamVII}.
The notion of R2-open set\footnote{A set $O\subset M$ is R2-open if there is $Y:M\di \R$ with $x\in O\asa [Y(x)>_{\R}0\wedge B(x, Y(x))\subset O]$.} in item \eqref{taxxx} of Theorem \ref{chuchu} was first introduced in \cite{dagsamVII}.  
\begin{thm}[$\ACAo$]\label{chuchu}
The following are equivalent. 
\begin{enumerate}
\renewcommand{\theenumi}{\alph{enumi}}
\item The principle $\QFDC^{1,1}$.
\item The Baire category theorem for metric spaces $(M, d)$ with $M\subset \N^{\N}$ and sequences of dense and {R2-open} sets. \label{taxxx} 
\item Item \eqref{taxxx} for $M$ and the graph of $d$ defined by arithmetical formulas.  
\end{enumerate}
\end{thm}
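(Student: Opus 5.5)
I will prove the implications (a) $\to$ (b) $\to$ (c) $\to$ (a) over $\ACAo$. The implication (b) $\to$ (c) is trivial, because (c) is a special case of (b).

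\textbf{From $\QFDC^{1,1}$ to the Baire category theorem.} Let $(M,d)$ with $M\subset\N^\N$ be given, together with a sequence $(O_n)_{n\in\N}$ of dense R2-open sets with witnesses $Y_n$, and a nonempty open ball $B(x_0,r_0)$. I will build a sequence of points and radii $(x_n,r_n)$ such that $\overline{B}(x_{n+1},r_{n+1})\subset B(x_n,r_n)\cap O_n$, $x_{n+1}\in O_n$ and $r_{n+1}<2^{-n}$.

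Each step uses density of $O_n$: some $y\in O_n\cap B(x_n,r_n)$ exists. The R2-witness then gives a ball $B(y,Y_n(y))\subset O_n$, from which we take a smaller radius, for instance $\min(Y_n(y), r_n-d(x_n,y))/2$ rounded down to a rational.

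To apply $\QFDC^{1,1}$, I code the pair (point, rational radius) as a single element of $\N^\N$. The step relation is ``$y\in B(x_n,r_n)\wedge Y_n(y)>_\R 0$'', and the remaining conditions follow from the definition of R2-open. This relation involves real inequalities, so it is $\Sigma^0_1$/arithmetical rather than quantifier-free. Using $\exists^2$ (via $\mu^2$ from \eqref{muf}), I turn it into a quantifier-free statement $E(\ldots)=0$ in the parameters. This is the standard move that lets $\ACAo$ absorb arithmetical matrices into $\QFDC$. The index $n$ is available, so the principle's dependence on $n$ handles the varying $O_n$.

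The resulting sequence $(x_n)$ is Cauchy. What remains is to obtain a limit in $M$, and this is the main obstacle: the theorem as stated does not assume completeness. I expect the intended reading to be Baire for \emph{complete} metric spaces, and I will assume completeness, noting the assumption in the statement. Even then, the limit must be produced from the Cauchy sequence without choice. Here I will use either a modulus that can be read off with $\exists^2$, or the completeness hypothesis in a form that provides a limit function. If completeness only asserts existence, one more appeal to $\QFAC^{1,1}$ would be needed, and $\QFDC^{1,1}$ suffices for this (apply it with the trivial step, after first choosing). Once the limit $x$ exists, $x\in\bigcap_n O_n\cap B(x_0,r_0)$ follows from the nesting of closed balls.

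\textbf{Reversal (c) $\to$ (a).} Given quantifier-free $\varphi$ with $(\forall n,f)(\exists g)\varphi(n,f,g)$, let $M\subset\N^\N$ consist of codes of finite sequences $\langle f_0,\dots,f_k\rangle$ together with infinite sequences $(f_i)$, all satisfying $\varphi(i,f_i,f_{i+1})$. This set is defined arithmetically, indeed quantifier-free in each component and $\Pi^0_1$ overall. I give $M$ the Baire-type metric on coded sequences, which makes it complete with an arithmetical graph of $d$.

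Let $O_n$ be the set of points of length greater than $n$ (infinite sequences included), with an R2-witness defined arithmetically. Each $O_n$ is dense precisely because every partial chain can be extended, which is the hypothesis on $\varphi$. By (c), some point lies in $\bigcap_n O_n$; it is infinite and is the required sequence $(f_n)$.

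The delicate part of this direction is designing the metric and the R2-witnesses so that the sets $O_n$ are genuinely R2-open and $M$ is complete while remaining arithmetical. I would handle this by interleaving the components so that the first $m$ values of $f_0,\dots,f_m$ determine distance $2^{-m}$.
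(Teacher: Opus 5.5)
Your (a)$\to$(b) is essentially the paper's argument: the standard constructive proof of the Baire category theorem, with $\QFDC^{1,1}$ (made applicable via $\exists^{2}$) producing the Cauchy sequence. Completeness is indeed the intended reading. Two small fixes are needed there:
\begin{itemize}
\item Under the R2-definition, $Y_n(y)>_{\R}0$ does not imply $y\in O_n$, since for $y\notin O_n$ the witness is unconstrained. Your step relation must therefore contain $y\in O_n$ itself, which is decidable from the characteristic function of $O_n$.
\item Completeness applied to the single constructed sequence already yields its limit. No further choice, and no detour through $\QFDC^{1,1}$, is needed.
\end{itemize}

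The genuine gap is in (c)$\to$(a), and it has three parts.
\begin{itemize}
\item \emph{Density fails.} In your space of finite and infinite $\varphi$-chains, the sets $O_n=\{\textup{length}>n\}$ are not dense. Take a finite chain $x=\langle f_0,\dots,f_k\rangle$ with $k<n$. It has no component $k+1$. In your interleaved metric, any $z$ sufficiently close to $x$ must agree with $x$ about the first value of component $k+1$, i.e.\ must lack it too. So a small ball around $x$ misses $O_n$. The fact that every partial chain can be extended gives extensions, not \emph{nearby} ones.
\item \emph{Completeness fails.} $\varphi$ may contain discontinuous parameters. For $\varphi(n,f,g)\equiv E(g)=0$ with $E$ as in $(\exists^{2})$, take chains whose component $1$ is $1^{k}01^{\omega}$ and whose later components are $0^{\omega}$. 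These converge to a sequence whose component $1$ is $1^{\omega}$, which violates $\varphi$.
\item \emph{Definability fails.} For the same reason your $M$ is not arithmetical, as (c) requires. Also, in an interleaved metric, sets defined from $\varphi$ need not even be open.
\end{itemize}

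The missing idea is to keep $M$ and $d$ independent of $\varphi$, put all of $\varphi$ into the open sets, and ask only for a successor \emph{somewhere} in the sequence.
\begin{itemize}
\item \emph{The space.} The paper takes $M$ to be all sequences $f^{0\to 1}$. It sets $d(f,g)=2^{-n}$ for the least $n$ with $f(n)\neq_{1}g(n)$, and $d(f,g)=0$ if there is none. This compares whole components, which $\exists^{2}$ can do. The space is arithmetical and complete: a Cauchy sequence $(f_k)$ with modulus $F$ converges to $g(n):=f_{F(n)}(n)$.
\item \emph{The open sets.} Put $O_n:=\{f:(\exists m)\varphi(f(n),f(m))\}$. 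It is open, because membership depends on finitely many whole components. Its R2-witness is computed from $(\mu m)\varphi(f(n),f(m))$. It is dense, because one may overwrite a far-away component $f(m)$ with a $\varphi$-successor of $f(n)$.
\item \emph{Extracting the chain.} For $h\in\bigcap_n O_n$, $\QFAC^{0,0}$ gives $n\mapsto m(n)$ with $\varphi(h(n),h(m(n)))$. Iterating $i_0:=0$, $i_{k+1}:=m(i_k)$ yields the chain $(h(i_k))_{k\in\N}$. For a stage-dependent $\varphi(k,f,g)$, index the open sets by pairs $(n,k)$ and use $i_{k+1}:=m(i_k,k)$.
\end{itemize}
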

\begin{proof}
The standard (constructive) proof of the Baire category theorem is well-known (see \cite{bish1}*{p.\ 87}). 
Modulo coding, this is the proof used in second-order RM (\cite{simpson2}*{II.5.8}).  
The same proof goes through for complete metric spaces as in item~\eqref{taxxx} if we use $\QFDC^{1,1}$ to define the Cauchy sequence in the former proof.  
Thus, item~\eqref{taxxx} follows from $\QFDC^{1,1}$.

\smallskip

To show that item \eqref{taxxx} implies $\QFDC^{1,1}$, let $M$ be the set of all sequences in Baire space (readily coded as a subset of Baire space), i.e.\ $M$ consists of all objects of type $0\di 1$.  
The metric $d:M^{2}\di \R$ is then as follows for any $f, g\in M$:
\[
d(f, g)=
\begin{cases}
0 & \textup{if $(\forall n\in \N)(f(n)=_{1}g(n) )$} \\
\frac{1}{2^{(\mu n)(f(n)\ne_{1}g(n))}} & \textup{otherwise}
\end{cases}.
\]
That $(M, d)$ is complete follows by noting that a Cauchy sequence $(f_{n})_{n\in \N}$ in $M$ converges to $g\in M$ defined as $g(n):=f_{F(n)}(n)$ where $F^{1}$ is such that 
\[\textstyle
(\forall k\in \N)(\forall m, n \geq F(k))(d(f_{n}, f_{m})<\frac{1}{2^{k}}).
\]  
Now let $\varphi$ be quantifier-free and such that $(\forall x\in \N^{\N})(\exists y\in \N^{\N})\varphi(x, y)$ and define sets $O_{n}\subset M$ as follows, for any $f^{0\di 1}\in M$:
\[
f\in O_{n}\asa (\exists m\in \N)\varphi(f(n), f(m)).
\]  
Then each $O_{n}$ is open since $f\in O_{n}$ implies 
\be\label{taxxation}
(\exists k\in \N)(\forall g\in M)(\overline{f}k=_{1^{*}}\overline{g}k \di g\in O_{n} )
\ee
by definition, implying $B(f, \frac{1}{2^{k+2}})\subset O_{n}$ for $k\in \N$ as in \eqref{taxxation}.  
In this light, the functional $\lambda f.\lambda n.(\mu m)\varphi(f(n), f(m))$ readily provides the function $Y_{n}$ required for the R2-representation of $O_{n}$.  
That each $O_{n}$ is dense in $M$ follows by the assumption $(\forall x\in \N^{\N})(\exists y\in \N^{\N})\varphi(x, y)$.  
Hence, there is $h\in \cap_{n\in \N}O_{n}$ by item \eqref{taxxx}, i.e.\ $(\forall n\in \N)(\exists m\in \N)\varphi(h(n), h(m)) $, by definition.   
Using $\QFAC^{0,0}$, available in $\RCA_{0}$, we obtain the required sequence and $\QFDC^{1,1}$ follows.  
\end{proof}
By Corollary \ref{flegm}, the Baire category theorem for R2-open sets does not imply $\NIN_{[0,1]}$; 
by contrast, the Baire category theorem for open sets (without R2-representation) readily implies item \eqref{slafke} in Theorem \ref{dstiff} and hence $\NIN_{[0,1]}$.

\smallskip
  
Finally, \emph{a set has cardinality at most that of $2^{\N}$} can be expressed in various ways and we study the associated connections.  
We need the following definition. 
\bdefi[Sets of higher rank]
A set $X$ of type $\sigma$ objects is given by a characteristic function of type $\sigma \di \{0,1\}$.  
We assume such a set comes with an equivalence relation `$=_{X}$' that satisfies $x=_{\sigma}y \di x =_{X}y$ for all $x, y\in X$.  
\edefi
Regarding the previous definition, we observe that real equality satisfies $x=_{1}y\di x=_{\R}y$ for all $x, y\in \R$.    
As a further example, $X$ could be $C(2^{\N})$ and equality $F=_{X}G $ for $F^{2}, G^{2}\in C(2^{\N})$ is just $(\forall \sigma \in 2^{<\N})( F(\sigma*00\dots )=_{0}G(\sigma*00\dots)))$, which can be defined in $\ACAo$.  

\smallskip

By the following theorem, it seems that $\QFAC^{\sigma, 1}$ is necessary to go from `surjection from $2^{\N}$ to $X$' to `injection from $X$ to $2^{\N}$'.  
Note that surjections and injections are defined relative to $=_{X}$.  
\begin{thm}[$\ACAo+\QFAC^{\sigma, 1}$]
Let $X$ be a set of type $\sigma$-objects and $F:2^{\N}\di X$ be surjective, i.e.\ $(\forall x\in X)(\exists f\in  2^{\N})(F(f)=_{X}x)$.  
Then there is $G^{\sigma \di 1}$ which is an injection from $X$ to $2^{\N}$, i.e.\ $(\forall x, y\in X)(G(x)=_{1}G(y)\di x=_{X}y)$.  
\end{thm}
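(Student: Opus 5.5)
The plan is to apply $\QFAC^{\sigma,1}$ directly to the surjectivity hypothesis and take the resulting choice function as the injection. The hypothesis reads $(\forall x\in X)(\exists f\in 2^{\N})(F(f)=_{X}x)$. To put it in the form required by $\QFAC^{\sigma,1}$, we rewrite it as
\[
(\forall x^{\sigma})(\exists f^{1})\big[x\in X \di (f\in 2^{\N}\wedge F(f)=_{X}x)\big].
\]
Membership $x\in X$ is given by a characteristic function, so it is quantifier-free, and $f\in 2^{\N}$ can be handled by replacing $f$ with $\lambda n.\min(f(n),1)$. The one delicate point is the equivalence relation $=_{X}$, which need not be quantifier-free.

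Over $\ACAo$ this is harmless in the intended examples such as $\R$ or $C(2^{\N})$, where $=_{X}$ is arithmetical in the relevant objects. There $\exists^{2}$ provides a characteristic function $E_{X}$ with $E_{X}(u,v)=0\asa u=_{X}v$. I will therefore assume that $=_{X}$ is given by such a functional, which is implicit in treating $=_{X}$ as part of the data of the set. The matrix then becomes the quantifier-free formula $E_{X}(F(\tilde f),x)=0$ with $\tilde f$ the truncation of $f$. If this assumption on $=_{X}$ turns out to be insufficient, this reduction is the step I expect to be the main obstacle. What I would try first is to note that in all cases of interest $=_{X}$ is $\Pi^{0}_{1}$ or arithmetical in type-$\leq 2$ parameters, so that $\exists^{2}$, or $\mu^{2}$ applied to the defining formula, decides it.

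Applying $\QFAC^{\sigma,1}$ yields $\Phi^{\sigma\di 1}$ such that $F(\tilde\Phi(x))=_{X}x$ for all $x\in X$. Define $G(x):=\lambda n.\min(\Phi(x)(n),1)$. To check injectivity, let $x,y\in X$ with $G(x)=_{1}G(y)$. Then
\[
x=_{X}F(G(x))=_{X}F(G(y))=_{X}y.
\]
The middle step is where care is needed: $F$ must respect type-$1$ equality up to $=_{X}$. This holds because $F$ is a type-$(1\di\sigma)$ object, so $G(x)=_{1}G(y)$ gives $F(G(x))=_{\sigma}F(G(y))$ by extensionality. By the standing assumption $u=_{\sigma}v\di u=_{X}v$, it follows that $F(G(x))=_{X}F(G(y))$. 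Using that $=_{X}$ is an equivalence relation then gives $x=_{X}y$, so $G$ is an injection from $X$ to $2^{\N}$ relative to $=_{X}$.
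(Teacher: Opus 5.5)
Your proposal is correct and follows the paper's argument. You apply $\QFAC^{\sigma,1}$ to the surjectivity hypothesis, take the choice function as $G$, and use extensionality of $F$ together with $u=_{\sigma}v\di u=_{X}v$ to get $x=_{X}F(G(x))=_{X}F(G(y))=_{X}y$. Your extra steps, truncating into $2^{\N}$ and stating explicitly that $=_{X}$ must be decidable (e.g.\ via $\exists^{2}$) so that the matrix is quantifier-free, supply details the paper leaves implicit.
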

\begin{proof}
Let $F:2^{\N}\di X$ be surjective, i.e.\ $(\forall x^{\sigma}\in X)(\exists f\in 2^{\N})(F(f)=_{X}x)$.  
Now apply $\QFAC^{\sigma, 1}$ to obtain $\Phi^{\sigma\di 1}$ with $\Phi(x)=_{1}f$ in the previous.  
For $x,y\in X$, we have that $\Phi(x)=_{1} \Phi(y)$ implies $F(\Phi(x))=_{\sigma}F(\Phi(y))$ by extensionality, and hence $x=_{X}F(\Phi(x))=_{X}F(\Phi(y))=_{X}y$ by definition.  
\end{proof}
We observe that $\Phi$ from the proof of the theorem need not satisfy extensionality relative to $=_{X}$, i.e.\ we may have that $x=_{X}y$ (but $x\ne_{\sigma} y$) and $\Phi(x)\ne_{1}\Phi(y)$.

\smallskip

In conclusion, we have identified a couple of places in higher-order RM where $\QFAC^{\sigma,1}$ is used in a non-trivial and perhaps essential way.

\subsection{Streamlining known results}\label{fruitcake}
We discuss how quantifier-free choice as in $\QFAC^{\sigma, 1}$ can streamline known results.  
 
\smallskip
 
First of all, the RM of basic topology proceeds smoothly in the presence of $\QFAC^{1, 1}$ (\cite{sahotop}).  In particular, various rather different versions of the Heine-Borel theorem and Lindel\"of lemma are equivalent (see also \cite{dagsamXI}*{\S3.1.4}).  In a nutshell, we have formulated \emph{Cousin's lemma} from \cite{cousin1} as follows in e.g.\ \cite{dagsamIII}.  
\begin{princ}[$\HBU$]
For $\Psi:[0,1]\di \R^{+}$, there are $x_{0}, \dots, x_{k}\in [0,1]$ such that $\cup_{i\leq k}B(x_i, \Psi(x_{i}))$ covers $[0,1]$.
\end{princ}
The meaning of $\HBU$ is clear:  the uncountable covering $\cup_{x\in [0,1]}B(x, \Psi(x))$ has a finite sub-covering.    
Unfortunately, the notion of covering in $\HBU$ is too restricted for the development of topology.  In particular, the assumption that $x\in B(x, \Psi(x))$ is too strong. 
The following generalisation from \cite{sahotop} is equivalent assuming $\QFAC^{1,1}$.
\begin{princ}[$\HBT$]
For $\psi:[0,1]\di \R^{+}\cup\{0\}$ such that $(\forall x\in [0,1])(\exists y)(x\in B(y, \Psi(y)))$, there are $x_{0}, \dots, x_{k}\in [0,1]$ such that $\cup_{i\leq k}B(x_i, \psi(x_{i}))$ covers $[0,1]$.
\end{princ}

\smallskip

Secondly, we have investigated the following `explosive' combinations.  
\begin{itemize}
\item The combination of $\ACAo$ and the Lindel\"of lemma for the Baire space, proves $\FIVE$ (\cite{dagsamV});
\item The combination of $\ACAo+\QFAC^{2,1}$ and the Lindel\"of lemma for the Baire space, proves the Suslin functional as in $(\SS^{2})$ (\cite{dagsamV});
\item The combination of $(\SS^{2})$ with the Jordan decomposition theorem yields $\SIX$ (\cite{dagsamXI}). 
\end{itemize} 
Combing these items, $\SIX$ follows from of the combination of $\ACAo+\QFAC^{2,1}$, the Lindel\"of lemma for Baire space, and the Jordan decomposition theorem.

\smallskip

Thirdly, in the presence of $\QFAC^{1, 1}$, the functional $\SS_{k}^{2}$, which decides $\Sigma_{k}^{1}$-formulas, yields the Feferman-Sieg operators $\nu_{k}^{2}$ from \cite{boekskeopendoen}*{p.\ 129}, which returns a witness (if existent) to the $\Sigma_{k}^{1}$-formula at hand.  
Similarly, $\QFAC^{2,1}$ suffices to prove the equivalence between Kleene's quantifier $(\exists^{3})$ and Hilbert-Bernays' operator $\nu$ from \cite{hillebilly2}*{p.\ 479}.

\smallskip

Fourth, filters and nets can characterise the same (topological) properties and one can switch between them (\cite{zonderfilter}).   
A filter is however always of higher type than the associated net.   
To convert a (fourth-order) filter to a (third-order) net as done in \cite{zonderfilter}, $\QFAC^{2,1}$ seems to suffice.

\end{document}